\documentclass[12pt,oneside,british]{amsart}
\usepackage[T1]{fontenc}
\usepackage[latin9]{luainputenc}
\usepackage{geometry}
\geometry{verbose,tmargin=3cm,bmargin=3cm,lmargin=3cm,rmargin=3cm}
\usepackage{babel}
\usepackage{float}
\usepackage{mathrsfs}
\usepackage{amstext}
\usepackage{amsthm}
\usepackage{amssymb}
\makeindex
\PassOptionsToPackage{normalem}{ulem}
\usepackage{ulem}
\usepackage[unicode=true,pdfusetitle,
 bookmarks=true,bookmarksnumbered=false,bookmarksopen=false,
 breaklinks=false,pdfborder={0 0 1},backref=false,colorlinks=false]
 {hyperref}

\makeatletter

\providecommand{\tabularnewline}{\\}

\numberwithin{equation}{section}
\numberwithin{figure}{section}
\theoremstyle{plain}
\newtheorem{thm}{\protect\theoremname}[section]
\theoremstyle{definition}
\newtheorem{defn}[thm]{\protect\definitionname}
\theoremstyle{remark}
\newtheorem{rem}[thm]{\protect\remarkname}
\theoremstyle{plain}
\newtheorem{prop}[thm]{\protect\propositionname}

\makeatother

\providecommand{\definitionname}{Definition}
\providecommand{\propositionname}{Proposition}
\providecommand{\remarkname}{Remark}
\providecommand{\theoremname}{Theorem}

\begin{document}
\title{$(\Theta_{n},sl_{n})$ - graded Lie algebras $(n=3,4)$}
\author{Hogir Mohammed Yaseen}
\address{Department of Mathematics, Salahaddin University-Erbil, Iraq.}
\email{hogr.yaseen@su.edu.krd, hogrmyaseen@gmail.com}
\begin{abstract}
Let $\mathbb{F}$ be a field of characteristic zero and let $\mathfrak{g}$
be a non-zero finite-dimensional split semisimple Lie algebra with
root system $\Delta$. Let $\Gamma$ be a finite set of integral weights
of $\mathfrak{g}$ containing $\Delta$ and $\{0\}$. Following \cite{key-2,key-9},
we say that a Lie algebra $L$ over $\mathbb{F}$ is \emph{generalized
root graded}, or more exactly $(\Gamma,\mathfrak{g})$-\emph{graded},
if $L$ contains a semisimple subalgebra isomorphic to $\mathfrak{g}$,
the $\mathfrak{g}$-module $L$ is the direct sum of its weight subspaces
$L_{\alpha}$ ($\alpha\in\Gamma$) and $L$ is generated by all $L_{\alpha}$
with $\alpha\ne0$ as a Lie algebra. Let $\mathfrak{g}\cong sl_{n}$
and 
\[
\Theta_{n}=\{0,\pm\varepsilon_{i}\pm\varepsilon_{j},\pm\varepsilon_{i},\pm2\varepsilon_{i}\mid1\leq i\neq j\leq n\}
\]
where $\{\varepsilon_{1},\dots,\varepsilon_{n}\}$ is the set of weights
of the natural $sl_{n}$-module. In \cite{key-10}, we classify $(\Theta_{n},sl_{n})$-graded
Lie algebras for $n>4$. In this paper we describe the multiplicative
structures and the coordinate algebras of $(\Theta_{n},sl_{n})$-graded
Lie algebras $(n=3,4)$. In $n=3$, we assume that 
\[
[V(2\omega_{1})\otimes C,V(2\omega_{1})\otimes C]=[V(2\omega_{2})\otimes C',V(2\omega_{2})\otimes C']=0
\]
 where $V(\omega)$ is the simple $\mathfrak{g}$-module of highest
weight $\omega$, $C={\rm Hom_{\mathfrak{g}}}(V(2\omega_{1}),L)$
and  $C'={\rm Hom_{\mathfrak{g}}}(V(2\omega_{2}),L)$ .
\end{abstract}

\maketitle
\tableofcontents{}

\allowdisplaybreaks

\global\long\def\bbF{\mathbb{F}}%

\global\long\def\ffg{\mathfrak{g}}%

\global\long\def\ffa{\mathfrak{a}}%

\global\long\def\ffb{\mathfrak{b}}%

\global\long\def\Ker{\operatorname{\rm Ker}}%

\global\long\def\ad{\operatorname{\rm ad}}%

\global\long\def\Der{\operatorname{\rm Der}}%

\global\long\def\Rad{\operatorname{\rm Rad}}%

\global\long\def\tra{\operatorname{\rm tr}}%

\global\long\def\hfb{\operatorname{\rm HF}}%

\global\long\def\Hom{\operatorname{\rm Hom}}%

\global\long\def\End{\operatorname{\rm End}}%

\global\long\def\hom{\operatorname{\rm Hom_{\mathfrak{g}}}}%

\global\long\def\dim{\operatorname{\rm dim}}%

\global\long\def\ki{\operatorname{\rm B}}%

\section{\label{sec:Multiplication-in--graded}Introduction}

In 1992, Berman and Moody introduced root-graded Lie algebras to study
toroidal Lie algebras and Slodowy matrix algebras intersection. Nonetheless,
this concept previously appeared in the study of simple Lie algebras
by Seligman \cite{key-10}. Root graded Lie algebras of simply-laced
finite root systems were classified up to central isogeny by Berman
and Moody in \cite{key-7}. The case of double-laced finite root systems
was settled by Benkart and Zelmanov \cite{key-6}. Non-reduced systems
$BC_{n}$ were considered by Allison, Benkart and Gao \cite{key-1}
(for $n\text{\ensuremath{\ge}}2$) and by Benkart and Smirnov \cite{key-5}
(for $n=1$). The aim of this paper is to describe the multiplicative
structures and the coordinate algebras of $(\Theta_{n},sl_{n})$-graded
Lie algebras where $n=3,4$ and 
\begin{eqnarray*}
\Theta_{n} & = & \{0,\pm\varepsilon_{i}\pm\varepsilon_{j},\pm\varepsilon_{i},\pm2\varepsilon_{i}\mid1\leq i\neq j\leq n\}
\end{eqnarray*}

and $\{\varepsilon_{1},\dots,\varepsilon_{n}\}$ is the set of weights
of the natural $sl_{n}$-module. Throughout the paper, the ground
field $\bbF$ is of characteristic zero and the grading subalgebra
$\mathfrak{g}\cong sl_{n}$. We denote by $\Theta_{n}^{+}$ the set
of dominant weights in $\Theta_{n}$ and the corresponding simple
$sl_{n}$-modules. Thus,
\begin{align*}
\Theta_{4}^{+} & =\{\omega_{1}+\omega_{3}=\varepsilon_{1}-\varepsilon_{4},\,\omega_{1}=\varepsilon_{1},\,\omega_{3}=-\varepsilon_{4},2\omega_{1}=2\varepsilon_{1},\ 2\omega_{3}=-2\varepsilon_{4},\,\omega_{2}=\varepsilon_{1}+\varepsilon_{2},\,0\},\\
\Theta_{3}^{+} & =\{\omega_{1}+\omega_{3}=\varepsilon_{1}-\varepsilon_{4},\,\omega_{1}=\varepsilon_{1},\,\omega_{3}=-\varepsilon_{4},2\omega_{1}=2\varepsilon_{1},\ 2\omega_{3}=-2\varepsilon_{4},\,0\}.
\end{align*}
These are the highest weights of the irreducible $\mathfrak{g}$-
modules. First we compute tensor product decompositions for the modules
in $\Theta_{n}^{+}$ . Then we describe the multiplicative structures
of $(\Theta_{n},sl_{n})$ - graded Lie algebras. The coordinate algebra
of these Lie algebras are described in Section \ref{4}.

\section{\label{2}tensor product decompositions for the modules in $\Theta_{n}^{+}$
$(n=3,4)$.}

We begin with the general definition of Lie algebras graded by finite
weight systems.
\begin{defn}
\cite{key-2} \label{def of gamma} Let $\Delta$ be a root system
and let $\Gamma$ be a finite set of integral weights of $\Delta$
containing $\Delta$ and $\{0\}$. A Lie algebra $L$ is called $(\Gamma,\mathfrak{g})$-\emph{graded
}(or simply\emph{ $\Gamma$-graded}) if 

$(\Gamma1)$ $L$ contains as a subalgebra a non-zero finite-dimensional
split semisimple Lie algebra 
\[
\mathit{\mathrm{\mathfrak{g}=\mathfrak{h}}\oplus\mathrm{\underset{\alpha\in\mathit{\text{\ensuremath{\Delta}}}}{\bigoplus}}}\mathit{\mathrm{\mathfrak{g}}}_{\alpha},
\]
 whose root system is $\Delta$ relative to a split Cartan subalgebra
$\text{\ensuremath{\mathfrak{h}}}=\mathfrak{g}_{0}$;

$(\Gamma2)$ $L=\underset{\alpha\in\Gamma}{\bigoplus}L_{\alpha}$
where $L_{\alpha}=\left\{ x\in\mathrm{\mathit{L}\mid\left[\mathit{h,x}\right]=\mathit{\alpha\left(h\right)x\text{ for all }h\in\mathrm{\mathit{\mathfrak{h}}}}}\right\} $;

$(\Gamma3)$ $L_{0}=\underset{\alpha,-\alpha\in\Gamma\setminus\{0\}}{\overset{}{\sum}}\left[L_{\alpha},L_{-\alpha}\right]$.
\end{defn}

The subalgebra $\mathfrak{g}$ is called the \emph{grading subalgebra}
of $L$. If $\mathscr{\mathfrak{g}}$ is the split simple Lie algebra
and $\Gamma=\Delta\cup\{0\}$ then $L$ is said to be \emph{root-graded}.

We fix a base $\Pi=\{\varepsilon_{i}-\varepsilon_{i+1}\text{ for }i=1,2,\cdots,n-1\}$
of simple roots for the root system $A_{n-1}=\{\varepsilon_{i}-\varepsilon_{j}\mid1\leq i\neq j\leq n\}$.
Let $L$ be a Lie algebra containing a non-zero split simple subalgebra
$\mathfrak{g}$. Using the same arguments of \cite[Lemma 3.1.2 and Proposition 3.2.2]{key-9}
we get the following:

(1) A Lie algebra $L$ is $(\Theta_{3},\mathfrak{g})$-graded if and
only if $L$ is generated by $\mathfrak{g}$ as an ideal and the $\mathfrak{g}$-module
$L$ decomposes into copies of the adjoint module, its symmetric $S^{2}V$,
its exterior squares $\wedge^{2}V$, their duals and the one dimensional
trivial $\mathfrak{g}$-module.

(2) A Lie algebra $L$ is $(\Theta_{4},\mathfrak{g})$-graded if and
only if $L$ is generated by $\mathfrak{g}$ as an ideal and the $\mathfrak{g}$-module
$L$ decomposes into copies of the adjoint module (we will denote
it by the same letter $\mathfrak{g}$), the natural module $V$, its
exterior squares $\wedge^{2}V$, its symmetric $S^{2}V$, their duals
and the one dimensional trivial $\mathfrak{g}$-module.

Thus, by collecting isotypic components, we get the following decomposition
of the $\mathfrak{g}$-module $L$:
\begin{align}
L & =(\mathfrak{g}\otimes A)\oplus(S\otimes C)\oplus(S'\otimes C')\oplus(\Lambda\otimes E)\oplus(\Lambda'\otimes E')\oplus D\text{ for }n=3;\label{eq:drezh mine}\\
L & =(\mathfrak{g}\otimes A)\oplus(V\otimes B)\oplus(V'\otimes B')\oplus(S\otimes C)\oplus(S'\otimes C')\oplus(\Lambda\otimes E)\oplus D\;\;\text{ for }n=4;\nonumber 
\end{align}
where $A,B,B',C,C',E$ are vector spaces, 
\begin{align*}
 & \mathfrak{g}:=V(\omega_{1}+\omega_{n-1}),\quad V:=V(\omega_{1}),\quad V':=V(\omega_{n-1}),\\
 & S:=V(2\omega_{1}),\quad S':=V(2\omega_{n-1}),\quad\Lambda:=V(\omega_{2}),\quad\Lambda':=V(\omega_{n-2})
\end{align*}
 and $D$ is the sum of the trivial $\mathfrak{g}$-modules. Note
that $\Lambda\cong\Lambda'$ for $n=4$.

Alternatively, these spaces can also be viewed as the corresponding
$\mathfrak{g}$-mod Hom-spaces: $A=\hom(\mathfrak{g},L)$, $B=\hom(V,L)$,
etc, so for each simple $\mathfrak{g}$-module $M$, the space $M\otimes\hom(M,L)$
is canonically identified with the $M$-isotypic component of $L$
via the evaluation map 
\begin{equation}
M\otimes\hom(M,L)\rightharpoondown L,\,\;m\otimes\varphi\mapsto\varphi(m),\label{eq:MHom}
\end{equation}

see \cite[Proposition 4.1.15]{key-8}. 

We can write the following examples of $\Theta_{n}$-graded Lie algebras
$(n=3,4)$:

1) Let $L=sl_{n+k}$ and let $\mathfrak{g}$ be the copy of $sl_{n}$
in the northwest corner. We consider the adjoint action of $\mathfrak{g}$
on $L$. Then the $\mathfrak{g}$-module $L$ decomposes into $k$
copies of the natural module $V=\mathbb{F}^{n}$, $k$ copies of the
dual module $V'=\Hom(V,\mathbb{F})$, an adjoint module $\mathfrak{g}$
and one dimensional trivial $\mathfrak{g}$-modules in its southeast
corner. Then

\[
L=\mathfrak{g}\oplus V^{\oplus k}\oplus V'^{\oplus k}\oplus D
\]
where $D$ is the sum of the trivial $sl_{n}$-modules. As a result,
we may write 
\[
L=\mathfrak{g}\oplus(V\otimes B)\oplus(V'\otimes B')\oplus D
\]
where $B\cong B'\cong\mathbb{F}^{k}$. Then $L$ is $(A_{n-1},\mathfrak{g})$-graded.
Bahturin and Benkart \cite{key-3} (for $n>3$) and Benkart and Elduque
\cite{key-4} (for $n=3$) described the multiplicative structure
of this type of Lie algebras.

2) Any Lie algebra which is $(A_{n-1},sl_{n})$-graded is also $\Theta_{n}$-graded.
For such a Lie algebra, the space $L_{\alpha}=\{0\}$ for all $\alpha$
not in $A_{n-1}$.

3) Let $L=sl_{2n+1}$ and $\mathfrak{g}=\left\{ \left[\begin{array}{ccc}
x & 0 & 0\\
0 & -x^{t} & 0\\
0 & 0 & 0
\end{array}\right]\mid x\in sl_{n}\right\} \subset L$ where $n=3,4$. We consider the adjoint action of $\mathfrak{g}$
on $L$. Then $L$ is $(\Theta_{n},\mathfrak{g})$-graded .
\begin{defn}
\label{Lem-Vdual} (1) We identify the $\mathfrak{g}$-modules $V$
and $V'$ with the space $\mathbb{F}^{n}$$(n=3,4)$ of column vectors
with the following actions: 
\begin{align*}
 & x.v=xv\text{ \text{ for }}x\in sl_{n},\,v\in V,\\
 & x.v'=-x^{t}v'\text{ for }x\in sl_{n},\,v'\in V'.
\end{align*}
(2) We identify $S$ and $S'$ (resp. $\Lambda$ and $\Lambda'$)
with symmetric (resp. skew-symmetric) $n\times n$ matrices over $\mathbb{F}$
$(n=3,4)$. Then $S$, $S'$, $\Lambda$ and $\Lambda'$ are $\mathfrak{g}$-modules
under the actions: 
\begin{align*}
 & x.s=xs+sx^{t}\text{ for }x\in sl_{n},\,s\in S,\\
 & x.\lambda=x\lambda+\lambda x^{t}\text{ for }x\in sl_{n},\,\lambda\in\Lambda,\\
 & x.s'=-s'x-x^{t}s'\text{ for }x\in sl_{n},\,s'\in S,\\
 & x.\lambda'=-\lambda'x-x^{t}\lambda'\text{ for }x\in sl_{n},\,\lambda'\in\Lambda'.
\end{align*}
\end{defn}

Since the subalgebra $\mathfrak{g}$ of $L$ is a $\mathfrak{g}$-submodule,
there exists a distinguished element $1$ of $A$ such that $\mathfrak{g}=\mathfrak{g}\otimes1$.
In particular, 
\begin{equation}
[x\otimes1,y\otimes b]=x.y\otimes b.\label{eq:iden 1}
\end{equation}
where $x\otimes1$ is in $\mathfrak{g}\otimes1$, $y\otimes b$ belongs
to one of the components in (\ref{eq:drezh mine}), and $x.y$ is
as in Definition \ref{Lem-Vdual}.

From equation \ref{eq:drezh mine} and Definition \ref{Lem-Vdual}
and the properties that $V\cong\Lambda'$ and $V'\cong\Lambda$ for
$sl_{3}$, we get the following decomposition of the $\mathfrak{g}$-module
$L$:
\begin{equation}
L=(\mathfrak{g}\otimes A)\oplus(S\otimes C)\oplus(S'\otimes C')\oplus(\Lambda\otimes E)\oplus(\Lambda'\otimes E')\oplus D\label{eq:sl3 banotation}
\end{equation}

From equation \ref{eq:drezh mine} and Definition \ref{Lem-Vdual}
and the property that $\Lambda\cong\Lambda'$ for $sl_{4}$, we get
the following decomposition of the $\mathfrak{g}$-module $L$:
\begin{equation}
L=(\mathfrak{g}\otimes A)\oplus(V\otimes B)\oplus(V'\otimes B')\oplus(S\otimes C)\oplus(S'\otimes C')\oplus\oplus(\Lambda\otimes E)\oplus D.\label{eq:sl4 ba notation}
\end{equation}

Using \cite[Section 3.4]{key-9}, in Tables \ref{t12-2}-\ref{t13}
we get the following $\Theta$-components of all tensor product decompositions
for the modules in $\Theta_{n}^{+}$ $(n=3,4)$. If the cell in row
$X$ and column $Y$ contains $Z$ this means that $\Theta(X\otimes Y)\cong Z$.

\begin{table}[H]
\begin{tabular}{|c|r@{\extracolsep{0pt}.}l|c|c|c|c|}
\hline 
$\otimes$ & \multicolumn{2}{c|}{$\mathfrak{g}$} & $S$ & $S'$ & $V\cong\Lambda'$ & $V'\cong\Lambda$\tabularnewline
\hline 
\hline 
$\mathfrak{g}$ & \multicolumn{2}{c|}{$\mathfrak{g+\mathfrak{g}}+T$} & $S+\Lambda$ & $S'+\Lambda'$ & $S'+\Lambda'$ & $S+\Lambda$\tabularnewline
\hline 
$S$ & \multicolumn{2}{c|}{$S+\Lambda$} & $S'$ & $\mathfrak{g}+T$ & $\mathfrak{g}$ & $\Lambda'$\tabularnewline
\hline 
$S'$ & \multicolumn{2}{c|}{$S'+\Lambda'$} & $\mathfrak{g}+T$ & $S$ & $\Lambda$ & $\mathfrak{g}$\tabularnewline
\hline 
$V$ & \multicolumn{2}{c|}{$S'+\Lambda'$} & $\mathfrak{g}$ & $\Lambda$ & $S+\Lambda$ & $\mathfrak{g}+T$\tabularnewline
\hline 
$V'$ & \multicolumn{2}{c|}{$S+\Lambda$} & $\Lambda'$ & $\mathfrak{g}$ & $\mathfrak{g}+T$ & $S'+\Lambda'$\tabularnewline
\hline 
\end{tabular}

\caption{$\Theta$-component of tensor product decompositions for $sl_{3}$}

\label{t12-2}
\end{table}

\begin{table}[H]
\begin{tabular}{|c|r@{\extracolsep{0pt}.}l|c|c|c|c|c|}
\hline 
$\otimes$ & \multicolumn{2}{c|}{$\mathfrak{g}$} & $S$ & $\Lambda\cong\Lambda'$ & $S'$ & $V$ & $V'$\tabularnewline
\hline 
\hline 
$\mathfrak{g}$ & \multicolumn{2}{c|}{$\mathfrak{g+\mathfrak{g}}+T$} & $S+\Lambda$ & $\begin{array}{c}
S+\Lambda+S'\end{array}$ & $S'+\Lambda$ & $V$ & $V'$\tabularnewline
\hline 
$S$ & \multicolumn{2}{c|}{$S+\Lambda$} & $0$ & $\mathfrak{g}$ & $\mathfrak{g}+T$ & $0$ & $V$\tabularnewline
\hline 
$\Lambda$ & \multicolumn{2}{c|}{$S+\Lambda+S'$} & $\mathfrak{g}$ & $\mathfrak{g}+T$ & $\mathfrak{g}$ & $\begin{array}{c}
V'\end{array}$ & $V$\tabularnewline
\hline 
$S'$ & \multicolumn{2}{c|}{$S'+\Lambda$} & $\mathfrak{g}+T$ & $\mathfrak{g}$ & 0 & $V'$ & $0$\tabularnewline
\hline 
$V$ & \multicolumn{2}{c|}{$V$} & $0$ & $\begin{array}{c}
V'\end{array}$ & $V'$ & $S+\Lambda$ & $\mathfrak{g}+T$\tabularnewline
\hline 
$V'$ & \multicolumn{2}{c|}{$V'$} & $V$ & $V$ & $0$ & $\mathfrak{g}+T$ & $S'+\Lambda$\tabularnewline
\hline 
\end{tabular}

\caption{$\Theta$-component of tensor product decompositions for $sl_{4}$}

\label{t13}
\end{table}

In Tables \label{t3-1-1} and \ref{t3} below , if the cell in row
$X$ and column $Y$ contains $Z$, this means that there is a bilinear
map $X\otimes Y\rightarrow Z$ given by $x\otimes y\mapsto(x,y)_{Z}$.
For simplicity of notation, we will write $dy$ instead of $(d,y)_{D}$
if $X=Z=D$ and we will write $\langle x,y\rangle$ instead of $(x,y)_{D}$
if $X,Y\neq D$ and $Z=D.$ In the case $X=Y=Z=A$, we have two bilinear
products $a_{1}\otimes a_{2}\mapsto a_{1}\circ a_{2}$ and $a_{1}\otimes a_{2}\mapsto[a_{1},a_{2}]$
for $a_{1},a_{2}\in A$. Note that some of the cells are empty. The
corresponding products $X\otimes Y\rightarrow Z$ will be defined
later by extending the existing maps $Y\otimes X\rightarrow Z$. This
will make the table symmetric. 

\begin{table}[H]
\begin{tabular}{|c|r@{\extracolsep{0pt}.}l|c|c|c|c|c|}
\hline 
$.$ & \multicolumn{2}{c|}{$A$} & $C$ & $C'$ & $E$ & $E'$ & $D$\tabularnewline
\hline 
\hline 
$A$ & \multicolumn{2}{c|}{$\begin{array}{c}
(A,\circ,[\ ]),\,D\end{array}$} & $C,E$ &  & $C,E$ &  & \tabularnewline
\hline 
$C$ & \multicolumn{2}{c|}{} & $0$ & $A,D$ & $E'$ & $A$ & \tabularnewline
\hline 
$C'$ & \multicolumn{2}{c|}{$C',E'$} &  & $0$ & $A$ & $E$ & \tabularnewline
\hline 
$E$ & \multicolumn{2}{c|}{} & $E'$ &  & $C',E'$ & $A,D$ & \tabularnewline
\hline 
$E'$ & \multicolumn{2}{c|}{$C',E'$} &  & $E$ &  & $C,E$ & \tabularnewline
\hline 
$D$ & \multicolumn{2}{c|}{$A$} & $C$ & $C'$ & $E$ & $E'$ & $D$\tabularnewline
\hline 
\end{tabular}

\caption{Bilinear products for $n=3$}

\label{t3-1}
\end{table}

\begin{table}[H]
\begin{tabular}{|c|r@{\extracolsep{0pt}.}l|c|c|c|c|c|c|}
\hline 
$.$ & \multicolumn{2}{c|}{$A$} & $B$ & $B'$ & $C$ & $C'$ & $E$ & $D$\tabularnewline
\hline 
\hline 
$A$ & \multicolumn{2}{c|}{$\begin{array}{c}
(A,\circ,[\ ]),\,D\end{array}$} & $B$ &  & $C,E$ &  & $C,E,C'$ & \tabularnewline
\hline 
$B$ & \multicolumn{2}{c|}{} & $C,E$ & $A,D$ & $0$ &  & $0$ & \tabularnewline
\hline 
$B'$ & \multicolumn{2}{c|}{$A$} &  & $C',E$ & $B$ & $0$ & $B$ & \tabularnewline
\hline 
$C$ & \multicolumn{2}{c|}{} & $0$ &  & $0$ & $A,D$ & $0$ & \tabularnewline
\hline 
$C'$ & \multicolumn{2}{c|}{$C',E$} & $B'$ & $0$ &  & $0$ & $A$ & \tabularnewline
\hline 
$E$ & \multicolumn{2}{c|}{} & $0$ &  & $0$ &  & $0$ & \tabularnewline
\hline 
$D$ & \multicolumn{2}{c|}{$A$} & $B$ & $B'$ & $C$ & $C'$ & $E$ & $D$\tabularnewline
\hline 
\end{tabular}

\caption{Bilinear products for $n=4$}

\label{t3}
\end{table}

.Let $x$ and $y$ be $n\times n$ matrices. We will use the following
products:
\begin{align*}
[x,y] & =xy-yx,\\
x\circ y & =xy+yx-\frac{2}{n}\tra(xy)I,\\
x\diamond y & =xy+yx,\\
(x\mid y) & =\ensuremath{\frac{1}{n}\tra(xy)}.
\end{align*}

\section{\label{3}Multiplication in $(\Theta_{n},sl_{n})$-graded Lie algebras
$(n=3,4)$}

\subsection{Multiplication in $\Theta_{3}$-graded Lie algebras}

Let $L$ be a $\Theta_{3}$-graded Lie algebra with the grading subalgebra
$\mathfrak{g}\cong sl_{3}$ and the properties that 
\begin{equation}
[V(2\omega_{1})\otimes C,V(2\omega_{1})\otimes C]=[V(2\omega_{2})\otimes C',V(2\omega_{2})\otimes C']=0.\label{condition}
\end{equation}
 where $V(\omega)$ is the simple $\mathfrak{g}$-module of highest
weight $\omega$, $C={\rm Hom_{\mathfrak{g}}}(V(2\omega_{1}),L)$
and  $C'={\rm Hom_{\mathfrak{g}}}(V(2\omega_{2}),L)$ . In \ref{eq:sl3 banotation},
we show that $L$ is the direct sum of finite-dimensional irreducible
$\mathfrak{g}$-modules whose highest weights are in $\Theta_{3}^{+}$,
i.e. as a $\mathfrak{g}$-module, 
\[
L=(\mathfrak{g}\otimes A)\oplus(S\otimes C)\oplus(S'\otimes C')\oplus(\Lambda\otimes E)\oplus(\Lambda'\otimes E')\oplus D.
\]
Note that $V'=U(sl_{3})e_{3}$ and $\Lambda=U(sl_{3})(E_{1,2}-E_{2,1})$
are highest weight modules with highest weight $\omega_{2}$ where
$E_{i,j}$ denote the matrix units and $e_{3}=\left[\begin{array}{c}
0\\
0\\
1
\end{array}\right]$ (resp. $V=U(sl_{4})e_{1}$ and $\Lambda'=U(sl_{4})(E_{3,4}-E_{4,3})$
are highest weight module with highest weight $\omega_{1}$ where
$e_{1}=\left[\begin{array}{c}
1\\
0\\
0
\end{array}\right]$). Define $f:\Lambda'\rightarrow V$ by $f(x.(E_{3,4}-E_{4,3})=x.e_{1}$
and $g:\Lambda\rightarrow V'$ by $g(x.(E_{1,2}-E_{2,1}))=x.e_{3}$
for all $x\in U(sl_{4})$. This allows us to identify $\Lambda$ with
$V'$ and $\Lambda'$ with $V$.

In (\ref{t2-1}) we list bases for all non-zero $\mathfrak{g}$-module
homomorphism spaces $\hom(X\otimes Y,Z)$ where $X,Y,Z\in\{\mathfrak{g},V,V',S,S',T\}$
and $X$ and $Y$ are both non-trivial. Note that all of them are
$1$-dimensional except the first one (which is 2-dimensional).

\begin{align}
\hom(\mathfrak{g}\otimes\mathfrak{g},\mathfrak{g}) & =span\{x\otimes y\mapsto xy-yx,\ x\otimes y\mapsto xy+yx-\frac{2}{3}\tra(xy)I\},\label{t2-1}\\
\hom(\Lambda\otimes\Lambda',\mathfrak{g}) & =span\{\lambda\otimes\lambda'\mapsto\lambda\lambda'-\frac{\tra(\lambda\lambda')}{3}I\},\nonumber \\
\hom(\Lambda\otimes\Lambda,\Lambda') & =span\{\lambda_{1}\otimes\lambda_{2}\mapsto f(\lambda'_{1})(f(\lambda'_{2}))^{t}-f(\lambda'_{2})(f(\lambda'_{1}))^{t}\},\nonumber \\
\hom(\Lambda\otimes\Lambda,S') & =span\{\lambda_{1}\otimes\lambda_{2}\mapsto f(\lambda'_{1})(f(\lambda'_{2}))^{t}+f(\lambda'_{2})(f(\lambda'_{1}))^{t}\},\nonumber \\
\hom(\Lambda\otimes\Lambda,\Lambda) & =span\{\lambda_{1}\otimes\lambda_{2}\mapsto g(\lambda_{1})(g(\lambda_{2}))^{t}-g(\lambda_{2})(g(\lambda_{1}))^{t}\},\nonumber \\
\hom(\Lambda\otimes\Lambda,S) & =span\{\lambda_{1}\otimes\lambda_{2}\mapsto g(\lambda_{1})(g(\lambda_{2}))^{t}+g(\lambda_{2})(g(\lambda_{1}))^{t}\},\nonumber \\
\hom(S\otimes\Lambda',\mathfrak{g}) & =span\{s\otimes\lambda'\mapsto s\lambda'\},\nonumber \\
\hom(S'\otimes\Lambda,\mathfrak{g}) & =span\{s'\otimes\lambda\mapsto s'\lambda\},\nonumber \\
\hom(S\otimes S',\mathfrak{g}) & =span\{s\otimes s'\mapsto ss'-\frac{\tra(ss')}{3}I\},\nonumber \\
\hom(\Lambda'\mathfrak{\otimes g},\Lambda') & =span\{\lambda'\otimes x\mapsto\lambda'x+x^{t}\lambda'\},\nonumber \\
\hom(S\otimes\Lambda,\Lambda') & =span\{s\otimes\lambda\mapsto sg(\lambda)\},\nonumber \\
\hom(\mathfrak{g}\otimes\Lambda,S) & =span\{x\otimes\lambda\mapsto x\lambda-\lambda x^{t}\},\nonumber \\
\hom(\mathfrak{g}\otimes\Lambda,\Lambda) & =span\{x\otimes\lambda\mapsto x\lambda+\lambda x^{t}\},\nonumber \\
\hom(S'\otimes\Lambda',\Lambda) & =span\{s'\otimes\lambda'\mapsto sf(\lambda')\},\nonumber \\
\hom(\mathfrak{g}\otimes S,S) & =span\{x\otimes s\mapsto xs+sx^{t}\},\nonumber \\
\hom(S'\mathfrak{\otimes g},S') & =span\{s'\otimes x\mapsto s'x+x^{t}s'\},\nonumber \\
\hom(\Lambda'\mathfrak{\otimes g},S') & =span\{\lambda'\otimes x\mapsto\lambda'x-x^{t}\lambda'\},\nonumber \\
\hom(\mathfrak{g}\otimes S,\Lambda) & =span\{x\otimes s\mapsto xs-sx^{t}\},\nonumber \\
\hom(S'\mathfrak{\otimes g},\Lambda') & =span\{s'\otimes x\mapsto s'x-x^{t}s'\},\nonumber \\
\hom(\mathfrak{g}\otimes\mathfrak{g},T) & =span\{x_{1}\otimes x_{2}\mapsto\frac{1}{3}\tra(x_{1}x_{2})\},\nonumber \\
\hom(\Lambda\otimes\Lambda',T) & =span\{\lambda\otimes\lambda'\mapsto\frac{1}{3}\tra(\lambda\lambda')\}.\nonumber \\
\hom(S\otimes S',T) & =span\{s\otimes s'\mapsto\frac{1}{3}\tra(ss')\},\nonumber 
\end{align}

\label{mutiplication-1} Following the methods in \cite{key-1,key-10,key-7,key-6},
using the results of (\ref{t2-1}) and Table\ref{t12-2} , we may
suppose that the multiplication in $L$ is given as follows. For all
$x,y\in sl_{3}$, $s\in S$, $\lambda_{1},\lambda_{2}\in\Lambda$,
$s'\in S'$, $\lambda'_{1},\lambda'_{2}\in\Lambda'$ and for all $a,a_{1},a_{2}\in A$,
$c\in C$, $c'\in C'$, $e\in E$, $e'\in E'$ and $d,d_{1},d_{2}\in D$,

\begin{flalign}
 & [x\otimes a_{1},y\otimes a_{2}]=(x\circ y)\otimes\frac{[a_{1},a_{2}]}{2}+[x,y]\otimes\frac{a_{1}\circ a_{2}}{2}+(x\mid y)\langle a_{1},a_{2}\rangle,\label{main for-2}\\
 & [\lambda\otimes e,\lambda'\otimes e']=(\lambda\lambda'-(\lambda\mid\lambda')I)\otimes(e,e')_{A}+(\lambda\mid\lambda')\langle e,e'\rangle=-[\lambda'\otimes e',\lambda\otimes e],\nonumber \\
 & [\lambda_{1}\otimes e_{1},\lambda_{2}\otimes e_{2}]=(g(\lambda_{1})(g(\lambda_{2}))^{t}+g(\lambda_{2})(g(\lambda_{1}))^{t})\otimes\frac{(e_{1},e_{2})_{C}}{2}\nonumber \\
 & \,\,\,\,\,\,\,\,\,\,\,\,\,\,\,\,\,\,\,\,+(g(\lambda_{1})(g(\lambda_{2}))^{t}-g(\lambda_{2})(g(\lambda_{1}))^{t})\otimes\frac{(e_{1},e_{2})_{E}}{2},\nonumber \\
 & [\lambda'_{1}\otimes e'_{1},\lambda'_{2}\otimes e'_{2}]=(f(\lambda'_{1})(f(\lambda'_{2}))^{t}+f(\lambda'_{2})(f(\lambda'_{1}))^{t})\otimes\frac{(e'_{1},e'_{2})_{C'}}{2}\nonumber \\
 & \,\,\,\,\,\,\,\,\,\,\,\,\,\,\,\,\,\,\,\,\,+(f(\lambda'_{1})(f(\lambda'_{2}))^{t}-f(\lambda'_{2})(f(\lambda'_{1}))^{t})\otimes\frac{(e'_{1},e'_{2})_{E'}}{2},\nonumber \\
 & [s\otimes c,s'\otimes c']=(ss'-(s\mid s')I)\otimes(c,c')_{A}+(s\mid s')\langle c,c'\rangle=-[s'\otimes c',s\otimes c],\nonumber \\
 & [x\otimes a,s\otimes c]=(xs+sx^{t})\otimes\frac{(a,c)_{C}}{2}+(xs-sx^{t})\otimes\frac{(a,c)_{E}}{2}=-[s\otimes c,x\otimes a],\nonumber \\
 & [x\otimes a,\lambda\otimes e]=(x\lambda+\lambda x^{t})\otimes\frac{(a,e)_{E}}{2}+(x\lambda-\lambda x^{t})\otimes\frac{(a,e)_{C}}{2}=-[\lambda\otimes e,x\otimes a],\nonumber \\
 & [s'\otimes c',x\otimes a]=(s'x+x^{t}s')\otimes\frac{(c',a)_{C'}}{2}+(s'x-x^{t}s')\otimes\frac{(c',a)_{E'}}{2}=-[x\otimes a,s'\otimes c'],\nonumber \\
 & [\lambda'\otimes e',x\otimes a]=(\lambda'x+x^{t}\lambda')\otimes\frac{(e',a)_{E'}}{2}+(\lambda'x-x^{t}\lambda')\otimes\frac{(e',a)_{C'}}{2}=-[x\otimes a,\lambda'\otimes e'],\nonumber \\
 & [s\otimes c,\lambda'\otimes e']=s\lambda'\otimes(c,e')_{A}=-[\lambda'\otimes e',s\otimes c],\nonumber \\
 & [s'\otimes c',\lambda\otimes e]=s'\lambda\otimes(c',e)_{A}=-[\lambda\otimes e,s'\otimes c'],\nonumber \\
 & [s'\otimes c',\lambda'\otimes e']=s'f(\lambda')\otimes(c',e')_{E}=-[\lambda'\otimes e',s'\otimes c'],\nonumber \\
 & [\lambda\otimes e,s\otimes c]=sg(\lambda)\otimes(e,c)_{E'}=-[s\otimes c,\lambda\otimes e],\nonumber \\
 & [d,x\otimes a]=x\otimes da=-[x\otimes a,d],\nonumber \\
 & [d,s\otimes c]=s\otimes dc=-[s\otimes c,d],\nonumber \\
 & [d,\lambda\otimes e]=\lambda\otimes de=-[\lambda\otimes e,d],\nonumber \\
 & [d,s'\otimes c']=s'\otimes dc'=-[s'\otimes c',d],\nonumber \\
 & [d,\lambda'\otimes e']=\lambda'\otimes de'=-[\lambda'\otimes e',d],\nonumber \\
 & [d_{1},d_{2}]\in D,\nonumber 
\end{flalign}

\subsection{Multiplication in $\Theta_{4}$-graded Lie algebras}

Let $L$ be a $\Theta_{4}$-graded Lie algebra with the grading subalgebra
$\mathfrak{g}\cong sl_{4}$. In \ref{eq:sl3 banotation}, we show
that $L$ is the direct sum of finite-dimensional irreducible $\mathfrak{g}$-modules
whose highest weights are in $\Theta_{3}^{+}$, i.e. as a $\mathfrak{g}$-module,
\[
L=(\mathfrak{g}\otimes A)\oplus(V\otimes B)\oplus(V'\otimes B')\oplus(S\otimes C)\oplus(S'\otimes C')\oplus(\Lambda\otimes E)\oplus D.
\]

\uline{N}ote that $\Lambda=U(sl_{4})(E_{1,2}-E_{2,1})$ and $\Lambda'=U(sl_{4})(E_{3,4}-E_{4,3})$
are highest weight modules with highest weight $\omega_{2}$. Define
$f:\Lambda'\rightarrow\Lambda$ by $f(x.(E_{3,4}-E_{4,3}))=x.(E_{1,2}-E_{2,1})$
for all $x\in U(sl_{4})$. This allows us to identify $\Lambda$ with
$\Lambda'$.

In (\ref{t2}) we list bases for all non-zero $\mathfrak{g}$-module
homomorphism spaces $\hom(X\otimes Y,Z)$ where $X,Y,Z\in\{\mathfrak{g},V,V',S,\Lambda,S',T\}$
and $X$ and $Y$ are both non-trivial. Note that all of them are
$1$-dimensional except the first one (which is 2-dimensional).

\begin{align}
\hom(\mathfrak{g}\otimes\mathfrak{g},\mathfrak{g}) & =span\{x\otimes y\mapsto xy-yx,\ x\otimes y\mapsto xy+yx-\frac{2}{4}\tra(xy)I\},\label{t2}\\
\hom(V\otimes V',\mathfrak{g}) & =span\{u\otimes v'\mapsto uv'^{t}-\frac{\tra(uv'^{t})}{4}I\},\nonumber \\
\hom(S\otimes\Lambda,\mathfrak{g}) & =span\{s\otimes\lambda\mapsto sf^{-1}(\lambda)\},\nonumber \\
\hom(S'\otimes\Lambda,\mathfrak{g}) & =span\{s'\otimes\lambda\mapsto s'\lambda\},\nonumber \\
\hom(\Lambda\otimes\Lambda,\mathfrak{g}) & =span\{\lambda\otimes\lambda'\mapsto\lambda f^{-1}(\lambda')-\frac{\tra(\lambda f^{-1}(\lambda'))}{4}I\},\nonumber \\
\hom(S\otimes S',\mathfrak{g}) & =span\{s\otimes s'\mapsto ss'-\frac{\tra(ss')}{4}I\},\nonumber \\
\hom(\mathfrak{g}\otimes V,V) & =span\{x\otimes v\mapsto xv\},\nonumber \\
\hom(\Lambda\otimes V',V) & =span\{\lambda\otimes v'\mapsto\lambda v'\},\nonumber \\
\hom(S\otimes V',V) & =span\{s\otimes v'\mapsto sv'\},\nonumber \\
\hom(\mathfrak{g}\otimes V',V') & =span\{x\otimes v'\mapsto x^{t}v'\},\nonumber \\
\hom(S'\otimes V,V') & =span\{s'\otimes v\mapsto s'v\},\nonumber \\
\hom(\Lambda\otimes V,V') & =span\{\lambda\otimes v\mapsto f^{-1}(\lambda)v\},\nonumber \\
\hom(\mathfrak{g}\otimes S,S) & =span\{x\otimes s\mapsto xs+sx^{t}\},\nonumber \\
\hom(V\otimes V,S) & =span\{u\otimes v\mapsto uv^{t}+vu^{t}\},\nonumber \\
\hom(\mathfrak{g}\otimes\Lambda,S) & =span\{x\otimes\lambda\mapsto x\lambda-\lambda x^{t}\}\nonumber \\
\hom(\mathfrak{g}\otimes\Lambda,S') & =span\{x\otimes\lambda\mapsto f^{-1}(\lambda)x-x^{t}f^{-1}(\lambda)\},\nonumber \\
\hom(S'\mathfrak{\otimes g},S') & =span\{s'\otimes x\mapsto s'x+x^{t}s'\},\nonumber \\
\hom(V'\otimes V',S') & =span\{u'\otimes v'\mapsto u'v'^{t}+v'u'^{t}\},\nonumber \\
\hom(\Lambda'\mathfrak{\otimes g},S') & =span\{\lambda'\otimes x\mapsto\lambda'x-x^{t}\lambda'\},\nonumber \\
\hom(\mathfrak{g}\otimes\Lambda,\Lambda) & =span\{x\otimes\lambda\mapsto x\lambda+\lambda x^{t}\},\nonumber \\
\hom(\mathfrak{g}\otimes S,\Lambda) & =span\{x\otimes s\mapsto xs-sx^{t}\},\nonumber \\
\hom(V\otimes V,\Lambda) & =span\{u\otimes v\mapsto uv^{t}-vu^{t}\},\nonumber \\
\hom(S'\mathfrak{\otimes g},\Lambda) & =span\{s'\otimes x\mapsto f(s'x-x^{t}s')\},\nonumber \\
\hom(V'\otimes V',\Lambda) & =span\{u'\otimes v'\mapsto f(u'v'^{t}-v'u'^{t})\},\nonumber \\
\hom(\mathfrak{g}\otimes\mathfrak{g},T) & =span\{x_{1}\otimes x_{2}\mapsto\frac{1}{4}\tra(x_{1}x_{2})\},\nonumber \\
\hom(V'\otimes V,T) & =span\{v^{t}\otimes u\mapsto\frac{1}{4}\tra(uv^{t})\},\nonumber \\
\hom(S\otimes S',T) & =span\{s\otimes s'\mapsto\frac{1}{4}\tra(ss')\},\nonumber \\
\hom(\Lambda\otimes\Lambda,T) & =span\{\lambda\otimes\lambda'\mapsto\frac{1}{4}\tra(\lambda f^{-1}(\lambda'))\}.\nonumber 
\end{align}

\label{mutiplication} Following the methods in \cite{key-1,key-10,key-7,key-6},
using the results of (\ref{t2}) and Table \ref{t13}, we may suppose
that the multiplication in $L$ is given as follows. For all $x,y\in sl_{4}$,
$u,v\in V$, $u',v'\in V'$, $s\in S$, $\lambda\in\Lambda$, $s'\in S'$,
$\lambda'\in\Lambda'$ and for all $a,a_{1},a_{2}\in A$, $b,b_{1},b_{2}\in B$,
$b',b_{1}',b_{2}'\in B'$, $c\in C$, $c'\in C'$, $e\in E$ and $d,d_{1},d_{2}\in D$, 

\begin{flalign}
 & [x\otimes a_{1},y\otimes a_{2}]=(x\circ y)\otimes\frac{[a_{1},a_{2}]}{2}+[x,y]\otimes\frac{a_{1}\circ a_{2}}{2}+(x\mid y)\langle a_{1},a_{2}\rangle,\label{main for}\\
 & [u\otimes b,v'\otimes b']=(uv'^{t}-\frac{\tra(uv'^{t})}{n}I)\otimes(b,b')_{A}+\frac{2}{n}\tra(uv'^{t})\langle b,b'\rangle=-[v'\otimes b',u\otimes b],\nonumber \\
 & [s\otimes c,s'\otimes c']=(ss'-(s\mid s')I)\otimes(c,c')_{A}+(s\mid s')\langle c,c'\rangle=-[s'\otimes c',s\otimes c],\nonumber \\
 & [\lambda_{1}\otimes e_{1},\lambda_{2}\otimes e_{2}]=(\lambda_{1}f^{-1}(\lambda_{2})-(\lambda_{1}\mid f^{-1}(\lambda_{2}))I)\otimes(e_{1},e_{2})_{A}+(\lambda_{1}\mid f^{-1}(\lambda_{2}))\langle e_{1},e_{2}\rangle,\nonumber \\
 & [u\otimes b_{1},v\otimes b_{2}]=(uv^{t}+vu^{t})\otimes\frac{(b_{1},b_{2})_{C}}{2}+(uv^{t}-vu^{t})\otimes\frac{(b_{1},b_{2})_{E}}{2},\nonumber \\
 & [u'\otimes b'_{1},v'\otimes b'_{2}]=(u'v'^{t}+v'u'^{t})\otimes\frac{(b'_{1},b'_{2})_{C'}}{2}+f(u'v'^{t}-v'u'^{t})\otimes\frac{(b'_{1},b'_{2})_{E}}{2},\nonumber \\
 & [x\otimes a,s\otimes c]=(xs+sx^{t})\otimes\frac{(a,c)_{C}}{2}+(xs-sx^{t})\otimes\frac{(a,c)_{E}}{2}=-[s\otimes c,x\otimes a],\nonumber \\
 & [x\otimes a,\lambda\otimes e]=(x\lambda+\lambda x^{t})\otimes\frac{(a,e)_{E}}{2}+(x\lambda-\lambda x^{t})\otimes\frac{(a,e)_{C}}{2},\nonumber \\
 & \,\,\,\,\,\,\,\,\,\,\,\,\,\,+(f^{-1}(\lambda)x-x^{t}f^{-1}(\lambda))\otimes\frac{(a,e)_{C'}}{2}=-[\lambda\otimes e,x\otimes a]\nonumber \\
 & [s'\otimes c',x\otimes a]=(s'x+x^{t}s')\otimes\frac{(c',a)_{C'}}{2}+f(s'x-x^{t}s')\otimes\frac{(c',a)_{E}}{2}=-[x\otimes a,s'\otimes c'],\nonumber \\
 & [s\otimes c,\lambda\otimes e]=sf^{-1}(\lambda)\otimes(c,e')_{A}=-[\lambda\otimes e,s\otimes c],\nonumber \\
 & [s'\otimes c',\lambda\otimes e]=s'\lambda\otimes(c',e)_{A}=-[\lambda\otimes e,s'\otimes c'],\nonumber \\
 & [x\otimes a,u\otimes b]=xu\otimes(a,b)_{B}=-[u\otimes b,x\otimes a],\nonumber \\
 & [s'\otimes c',u\otimes b]=s'u\otimes(c',b)_{B'}=-[u\otimes b,s'\otimes c'],\nonumber \\
 & [\lambda\otimes e,u\otimes b]=f^{-1}(\lambda)u\otimes(e,b)_{B'}=-[u\otimes b,\lambda\otimes e],\nonumber \\
 & [u'\otimes b',x\otimes a]=x^{t}u'\otimes(b',a)_{B'}=-[x\otimes a,u'\otimes b'],\nonumber \\
 & [u'\otimes b',s\otimes c]=su'\otimes(b',c)_{B}=-[s\otimes c,u'\otimes b'],\nonumber \\
 & [u'\otimes b',\lambda\otimes e]=-\lambda u'\otimes(b',e)_{B}=-[\lambda\otimes e,u'\otimes b'],\nonumber \\
 & [d,x\otimes a]=x\otimes da=-[x\otimes a,d],\nonumber \\
 & [d,u\otimes b]=u\otimes db=-[u\otimes b,d],\nonumber \\
 & [d,s\otimes c]=s\otimes dc=-[s\otimes c,d],\nonumber \\
 & [d,\lambda\otimes e]=\lambda\otimes de=-[\lambda\otimes e,d],\nonumber \\
 & [d,s'\otimes c']=s'\otimes dc'=-[s'\otimes c',d],\nonumber \\
 & [d,u'\otimes b']=u'\otimes db'=-[u'\otimes b',d],\nonumber \\
 & [d_{1},d_{2}]\in D,\nonumber 
\end{flalign}
All other products of the homogeneous components of the decomposition
(\ref{eq:drezh mine}) are zero.

\section{\label{4} Coordinate algebra of $(\Theta_{n},sl_{n})$-graded Lie
algebras $(n=3,4)$}

Let $L$ be a $\Theta_{n}$-graded Lie algebra with the grading subalgebra
$\mathfrak{g}\cong sl_{n}$. Throughout this section we assume that
$n=4$ or $n=3$ and the conditions (\ref{condition}) hold. Let $\mathfrak{g}^{\pm}=\{x\in sl_{n}\mid x^{t}=\pm x\}$.
Then 
\begin{equation}
\mathfrak{g}\otimes A=(\mathfrak{g^{+}}\oplus\mathfrak{g^{-}})\otimes A=(\mathfrak{g^{+}}\otimes A)\oplus(\mathfrak{g^{-}}\otimes A)=(\mathfrak{g^{+}}\otimes A^{-})\oplus(\mathfrak{g}^{-}\otimes A^{+})\label{AA-1}
\end{equation}
where $A^{\pm}$ is a copy of the vector space $A$. Recall that we
identify $\mathfrak{g}$ with $\mathfrak{g}\otimes1$ where $1$ is
a distinguished element of $A$. We denote by $a^{\pm}$ the image
of $a\in A$ in the space $A^{\pm}$. Denote
\begin{align*}
\mathfrak{a} & :=A^{+}\oplus A^{-}\oplus C\oplus C'\oplus E\oplus E'\text{ for }n=3.\\
\mathfrak{a} & :=A^{+}\oplus A^{-}\oplus C\oplus E\oplus C'\text{\quad and\quad}\mathfrak{b}:=\mathfrak{a}\oplus B\oplus B'\text{ for }n=4.
\end{align*}

In Section \ref{3} we described the multiplicative structures of
these Lie algebras. In this section we describe the coordinate algebras
of them. and we show that the product in $L$ induces an algebra structure
on both $\mathfrak{a}$ and $\mathfrak{b}$. 

\subsection{\label{sec:Unital-associative-algebra aa-1}Unital algebra $\mathfrak{a}$}

\label{fomulas-1}We are going to define Lie and Jordan multiplication
on $\mathfrak{a}$ by extending the bilinear products given in Tables
\ref{t4-1} and \ref{t4-1-2} in a natural way. It can be shown that
all products $(\alpha_{1},\alpha_{2})_{Z}$ with $\alpha_{1},\alpha_{2}\in\mathfrak{a}$
are either symmetric or skew-symmetric, except in the cases ($\alpha_{1}=c$
and $\alpha_{2}=e$ or $\alpha_{1}=c'$ and $\alpha_{2}=e'$) for
$n=3$). This is why we will write $(\alpha_{1}\circ\alpha_{2})_{Z}$
or $[\alpha_{1},\alpha_{2}]_{Z}$, respectively, instead of $(\alpha_{1},\alpha_{2})_{Z}$.
The aim of this subsection is to show that $\mathfrak{a}$ is a unital
algebra with respect to the new multiplication given by $\alpha_{1}\alpha_{2}:=\frac{[\alpha_{1},\alpha_{2}]}{2}+\frac{\alpha_{1}\circ\alpha_{2}}{2}$
for all homogeneous $\alpha_{1},\alpha_{2}\in\mathfrak{a}$ with the
products $[\ ]$ and $\circ$ given by Table \ref{t4-1}, except in
the cases ($\alpha_{1}=c$ and $\alpha_{2}=e$ or $\alpha_{1}=c'$
and $\alpha_{2}=e'$) for $n=3$ and $a^{\pm}\lambda$ for $n=4$,
see (\ref{eq:except hom}).
\begin{rem}
\label{FF-1} By using the same arguments of \cite[Remark 4.1.1]{key-9})
some of the products in (\ref{main for}) and (\ref{t2-1}) can be
rewritten in terms of symmetric and skew-symmetric elements. For example
for $n=4$ these products can be written as follows: For all $x^{\pm},x_{1}^{\pm},x_{2}^{\pm}\in\mathfrak{g^{\pm}}$,
$u,v\in V$, $u',v'\in V'$, $s\in S$, $\lambda,\lambda_{1},\lambda_{2}\in\Lambda$,
$s'\in S'$ and for all $a^{\pm},a_{1}^{\pm},a_{2}^{\pm}\in A$, $b,b_{1},b_{2}\in B$,
$b',b_{1}',b_{2}'\in B'$, $c\in C$, $c'\in C'$, $e\in E$,  $d\in D$,
\begin{align*}
[x_{1}^{+}\otimes a_{1}^{-},x_{2}^{+}\otimes a_{2}^{-}] & =x_{1}^{+}\circ x_{2}^{+}\otimes\frac{[a_{1}^{-},a_{2}^{-}]_{A^{-}}}{2}+[x_{1}^{+},x_{2}^{+}]\otimes\frac{(a_{1}^{-}\circ a_{2}^{-})_{A^{+}}}{2}+(x_{1}^{+}\mid x_{2}^{+})\langle a_{1}^{-},a_{2}^{-}\rangle\text{,}\\{}
[x_{1}^{-}\otimes a_{1}^{+},x_{2}^{-}\otimes a_{2}^{+}] & =x_{1}^{-}\circ x_{2}^{-}\otimes\frac{[a_{1}^{+},a_{2}^{+}]_{A^{-}}}{2}+[x_{1}^{-},x_{2}^{-}]\otimes\frac{(a_{1}^{+}\circ a_{2}^{+})_{A^{+}}}{2}+(x_{1}^{-}\mid x_{2}^{-})\langle a_{1}^{+},a_{2}^{+}\rangle\text{,}\\{}
[x_{1}^{+}\otimes a_{1}^{-},x_{1}^{-}\otimes a_{1}^{+}] & =x_{1}^{+}\diamond x_{1}^{-}\otimes\frac{[a_{1}^{-},a_{1}^{+}]_{A^{+}}}{2}+[x_{1}^{+},x_{1}^{-}]\otimes\frac{(a_{1}^{-}\circ a_{1}^{+})_{A^{-}}}{2}\text{.}\\{}
[x^{+}\otimes a^{-},s\otimes c] & =x^{+}\diamond s\otimes\frac{[a^{-},c]_{C}}{2}+[x^{+},s]\otimes\frac{(a^{-}\circ c)_{E}}{2},\\{}
[x^{-}\otimes a^{+},s\otimes c] & =x^{-}\diamond s\otimes\frac{[a^{+},c]_{E}}{2}+[x^{-},s]\otimes\frac{(a^{+}\circ c)_{C}}{2}.\\{}
[x^{+}\otimes a^{-},\lambda\otimes e] & =x^{+}\diamond\lambda\otimes\frac{[a^{-},e]_{E}}{2}+[x^{+},\lambda]\otimes\frac{(a^{-}\circ e)_{C}}{2}+[x^{+},f^{-1}(\lambda)]\otimes\frac{(a^{-}\circ e)_{C'}}{2}.\\{}
[x^{-}\otimes a^{+},\lambda\otimes e] & =x^{-}\diamond\lambda\otimes\frac{[a^{+},e]_{C}}{2}+[x^{-},\lambda]\otimes\frac{(a^{+}\circ e)_{E}}{2}+x^{-}\diamond f^{-1}(\lambda)\otimes\frac{[a^{+},e]_{C'}}{2}.\\{}
[s'\otimes c',x^{+}\otimes a^{-}] & =s'\diamond x^{+}\otimes\frac{[c',a^{-}]_{C'}}{2}+f([s',x^{+}])\otimes\frac{(c'\circ a^{-})_{E}}{2},\\{}
[s'\otimes c',x^{-}\otimes a^{+}] & =f(s'\diamond x^{-})\otimes\frac{[c',a^{+}]_{E}}{2}+[s',x^{-}]\otimes\frac{(c'\circ a^{+})_{C'}}{2}.\\{}
[s\otimes c,s'\otimes c'] & =s\circ s'\otimes\frac{[c,c']_{A^{-}}}{2}+[s,s']\otimes\frac{(c\circ c')_{A^{+}}}{2}+(s\mid s')\langle c,c'\rangle.\\{}
[\lambda_{1}\otimes e_{1},\lambda_{2}\otimes e_{2}] & =\lambda_{1}\circ f^{-1}(\lambda_{2})\otimes\frac{[e_{1},e_{2}]_{A^{-}}}{2}+[\lambda_{1},f^{-1}(\lambda_{2})]\otimes\frac{(e_{1}\circ e_{2})_{A^{+}}}{2}+(\lambda_{1}\mid f^{-1}(\lambda_{2}))\langle e_{1},e_{1}\rangle.\\{}
[s\otimes c,\lambda\otimes e] & =s\diamond\lambda\otimes\frac{[c,e]_{A^{+}}}{2}+[s,\lambda]\otimes\frac{(c\circ e)_{A^{-}}}{2}.\\{}
[s'\otimes c',\lambda\otimes e] & =s'\diamond\lambda\otimes\frac{[c',e]_{A^{+}}}{2}+[s',\lambda]\otimes\frac{(c'\circ e)_{A^{-}}}{2}.
\end{align*}
\end{rem}

The mappings $\alpha\otimes\beta\mapsto(\alpha\circ\beta)_{Z_{1}}$
and $\alpha\otimes\beta\mapsto[\alpha,\beta]_{Z_{2}}$ can be extended
to $Y\otimes X$ in a consistent way by defining $(\beta\circ\alpha)_{Z_{1}}=(\alpha\circ\beta)_{Z_{1}}$
and $[\beta,\alpha]_{Z_{2}}=-[\alpha,\beta]_{Z_{2}}$ . For $n=3$,
$\alpha\in C$ and $\beta\in E$ we will write $\alpha\beta$ (resp.
$\beta\alpha$) instead of $(\alpha,\beta)_{Z}$ (resp. $(\beta,\alpha)_{Z}$).
and the map can be extended by by defining $(\beta\alpha)_{E'}=-(\alpha\beta)_{E'}$(
resp,~$(\beta\alpha)_{E}=-(\alpha\beta)_{E}$). In Tables \ref{t4-1}
and \ref{t4-1-2} below, if the cell in row $X$ and column $Y$ contains
$(Z_{1},\circ)$, and $(Z_{2},[\ ])$ this means that there is a symmetric
bilinear map $X\times Y\rightarrow Z_{1},$ given by $\alpha\otimes\beta\mapsto(\alpha\circ\beta)_{Z_{1}}$
and a skew symmetric bilinear map $X\times Y\rightarrow Z_{2},$ given
by $\alpha\otimes\beta\mapsto[\alpha,\beta]_{Z_{2}}$ $(\alpha\in X,\beta\in Y)$. 

\begin{table}[H]
\begin{tabular}{|c|r@{\extracolsep{0pt}.}l|c|c|c|c|c|}
\hline 
$.$ & \multicolumn{2}{c|}{$A^{+}$} & $A^{-}$ & $C$ & $C'$ & $E$ & $E'$\tabularnewline
\hline 
\hline 
$A^{+}$ & \multicolumn{2}{c|}{$\begin{array}{c}
(A^{+},\circ)\\
(A^{-},[\ ])
\end{array}$} & $\begin{array}{c}
(A^{-},\circ)\\
(A^{+},[\ ])
\end{array}$ & $\begin{array}{c}
(C,\circ)\end{array}$ & $\begin{array}{c}
(C',\circ)\end{array}$ & $\begin{array}{c}
(C,\circ)\\
(E,[\ ])
\end{array}$ & $\begin{array}{c}
(E',\circ)\\
(C',[\ ])
\end{array}$\tabularnewline
\hline 
$A^{-}$ & \multicolumn{2}{c|}{$\begin{array}{c}
(A^{-},\circ)\\
(A^{+},[\ ])
\end{array}$} & $\begin{array}{c}
(A^{+},\circ)\\
(A^{-},[\ ])\text{ }
\end{array}$ & $\begin{array}{c}
(C,[\ ])\end{array}$ & $\begin{array}{c}
(C',[\ ])\end{array}$ & $\begin{array}{c}
(E,\circ)\\
(C,[\ ])
\end{array}$ & $\begin{array}{c}
(C',\circ)\\
(E',[\ ])
\end{array}$\tabularnewline
\hline 
$C$ & \multicolumn{2}{c|}{$\begin{array}{c}
(C,\circ)\end{array}$} & $\begin{array}{c}
(C,[\ ])\text{ }\end{array}$ & $0$ & $\begin{array}{c}
(A^{+},\circ)\\
(A^{-},[\ ])
\end{array}$ & $\begin{array}{c}
E'\end{array}$ & $\begin{array}{c}
(A^{-},\circ)\\
(A^{+},[\ ])
\end{array}$\tabularnewline
\hline 
$C'$ & \multicolumn{2}{c|}{$\begin{array}{c}
(C',\circ)\end{array}$} & $\begin{array}{c}
(C',[\ ])\end{array}$ & $\begin{array}{c}
(A^{+},\circ)\\
(A^{-},[\ ])
\end{array}$ & $0$ & $\begin{array}{c}
(A^{-},\circ)\\
(A^{+},[\ ])
\end{array}$ & $\begin{array}{c}
E\end{array}$\tabularnewline
\hline 
$E$ & \multicolumn{2}{c|}{$\begin{array}{c}
(C,\circ)\\
(E,[\ ])
\end{array}$} & $\begin{array}{c}
(E,\circ)\\
(C,[\ ])
\end{array}$ & $\begin{array}{c}
E'\end{array}$ & $\begin{array}{c}
(A^{-},\circ)\\
(A^{+},[\ ])
\end{array}$ & $\begin{array}{c}
(E',\circ)\mbox{ }\\
(C',[\ ])
\end{array}$ & $\begin{array}{c}
(A^{+},\circ)\mbox{ }\\
(A^{-},[\ ])
\end{array}$\tabularnewline
\hline 
$E''$ & \multicolumn{2}{c|}{$\begin{array}{c}
(E',\circ)\\
(C',[\ ])
\end{array}$} & $\begin{array}{c}
(C',\circ)\\
(E',[\ ])
\end{array}$ & $\begin{array}{c}
(A^{-},\circ)\\
(A^{+},[\ ])
\end{array}$ & $\begin{array}{c}
E\end{array}$ & $\begin{array}{c}
(A^{+},\circ)\mbox{ }\\
(A^{-},[\ ])
\end{array}$ & $\begin{array}{c}
(E,\circ)\\
(C,[\ ])
\end{array}$\tabularnewline
\hline 
\end{tabular}

\caption{Products of homogeneous components of $\mathfrak{a}$ for $n=3$}

\label{t4-1}
\end{table}

\begin{table}[H]
\begin{tabular}{|c|r@{\extracolsep{0pt}.}l|c|c|c|c|}
\hline 
$.$ & \multicolumn{2}{c|}{$A^{+}$} & $A^{-}$ & $C$ & $E$ & $C'$\tabularnewline
\hline 
\hline 
$A^{+}$ & \multicolumn{2}{c|}{$\begin{array}{c}
(A^{+},\circ)\\
(A^{-},[\ ])
\end{array}$} & $\begin{array}{c}
(A^{-},\circ)\\
(A^{+},[\ ])
\end{array}$ & $\begin{array}{c}
(C,\circ)\\
(E,[\ ])
\end{array}$ & $\begin{array}{c}
(E,\circ)\\
(C,[\ ])\\
(C',\circ)
\end{array}$ & $\begin{array}{c}
(C',\circ)\\
(E,[\ ])
\end{array}$\tabularnewline
\hline 
$A^{-}$ & \multicolumn{2}{c|}{$\begin{array}{c}
(A^{-},\circ)\\
(A^{+},[\ ])
\end{array}$} & $\begin{array}{c}
(A^{+},\circ)\\
(A^{-},[\ ])\text{ }
\end{array}$ & $\begin{array}{c}
(E,\circ)\\
(C,[\ ])
\end{array}$ & $\begin{array}{c}
(C,\circ)\\
(E,[\ ])\\
(C',\circ)
\end{array}$ & $\begin{array}{c}
(E,\circ)\\
(C',[\ ])
\end{array}$\tabularnewline
\hline 
$C$ & \multicolumn{2}{c|}{$\begin{array}{c}
(C,\circ)\\
(E,[\ ])
\end{array}$} & $\begin{array}{c}
(E,\circ)\mbox{ }\\
(C,[\ ])\text{ }
\end{array}$ & $0$ & $\begin{array}{c}
(A^{-},\circ)\\
(A^{+},[\ ])
\end{array}$ & $\begin{array}{c}
(A^{+},\circ)\\
(A^{-},[\ ])
\end{array}$\tabularnewline
\hline 
$E$ & \multicolumn{2}{c|}{$\begin{array}{c}
(E,\circ)\\
(C,[\ ])\\
(C',\circ)
\end{array}$} & $\begin{array}{c}
(C,\circ)\\
(E,[\ ])\\
(C',\circ)
\end{array}$ & $\begin{array}{c}
(A^{-},\circ)\\
(A^{+},[\ ])
\end{array}$ & $\begin{array}{c}
(A^{+},\circ)\\
(A^{-},[\ ])
\end{array}$ & $\begin{array}{c}
(A^{-},\circ)\\
(A^{+},[\ ])
\end{array}$\tabularnewline
\hline 
$C'$ & \multicolumn{2}{c|}{$\begin{array}{c}
(C',\circ)\\
(E,[\ ])
\end{array}$} & $\begin{array}{c}
(E,\circ)\\
(C',[\ ])
\end{array}$ & $\begin{array}{c}
(A^{+},\circ)\\
(A^{-},[\ ])
\end{array}$ & $\begin{array}{c}
(A^{-},\circ)\\
(A^{+},[\ ])
\end{array}$ & $0$\tabularnewline
\hline 
\end{tabular}

\caption{Products of homogeneous components of $\mathfrak{a}$ for $n=4$}

\label{t4-1-2}
\end{table}

We are going to show that $\mathfrak{a}=A^{+}\oplus A^{-}\oplus C\oplus E\oplus C'$
is an associative algebra with respect to multiplication defined as
follows: 
\[
\alpha_{1}\alpha_{2}:=\frac{[\alpha_{1},\alpha_{2}]}{2}+\frac{\alpha_{1}\circ\alpha_{2}}{2}
\]
 for all homogeneous $\alpha_{1},\alpha_{2}\in\mathfrak{a}$ with
the products $[\ ]$ and $\circ$ given by Table \ref{t4-1}, except
in the case $a^{\pm}\lambda$ which is equal to 
\begin{align}
a^{-}\lambda & =\frac{[a^{-},e]_{E}}{2}+\frac{(a^{-}\circ e)_{C}}{2}+\frac{(a^{-}\circ e)_{C'}}{2},\label{eq:except hom}\\
a^{+}\lambda & =\frac{[a^{+},e]_{C}}{2}+\frac{(a^{+}\circ e)_{E}}{2}+\frac{[a^{+},e]_{C'}}{2}.\nonumber 
\end{align}
 Note that $[\alpha_{1},\alpha_{2}]=\alpha_{1}\alpha_{2}-\alpha_{2}\alpha_{1}$
and $\alpha_{1}\circ\alpha_{2}=\alpha_{1}\alpha_{2}+\alpha_{2}\alpha_{1}$.
\begin{prop}
1)\label{cor A is sub-1} $\mathcal{A}=A^{-}\oplus A^{+}$ is an associative
subalgebra of $\mathfrak{a}$ with identity element $1^{+}$.

2) $C\oplus E$ and $C'\oplus E'$ are $\mathcal{A}$-bimodules for
$n=3$.

3) $C\oplus E\oplus C'$ are $\mathcal{A}$-bimodule for $n=4$.
\end{prop}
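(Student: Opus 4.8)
The plan is to verify each assertion by reducing it to identities that hold inside $L$ and then transferring them to $\mathfrak{a}$ via the multiplication formulas. For part (1), I would first observe that by \eqref{main for} (and its $n=3$ analogue \eqref{main for-2}) the bracket $[x_1^-\otimes a_1^+, x_2^-\otimes a_2^+]$ and the other brackets among $\mathfrak{g}^\pm\otimes A^\mp$ encode, in their $\mathfrak{g}\otimes A$ and $D$ parts, exactly the products $a_1^+\circ a_2^+$, $[a_1^+,a_2^+]$, etc. The key point is that the associativity of $\mathcal{A}=A^-\oplus A^+$ under $\alpha_1\alpha_2=\tfrac12[\alpha_1,\alpha_2]+\tfrac12(\alpha_1\circ\alpha_2)$ is forced by the Jacobi identity in $L$ applied to three elements of $\mathfrak{g}\otimes A$. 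Concretely, I would expand $[[X_1,X_2],X_3]+[[X_2,X_3],X_1]+[[X_3,X_1],X_2]=0$ with $X_i=x_i\otimes a_i$, collect the coefficients of the two independent $\mathfrak{g}$-equivariant maps $x\mapsto x\circ y$ and $x\mapsto[x,y]$ (these are linearly independent by \eqref{t2}/\eqref{t2-1}, which is why all the $\hom$-spaces in question are one- or two-dimensional), and read off the resulting bilinear identities on $A$. One obtains precisely the associator relations that say $(\mathcal{A},\cdot)$ is associative; the element $1\in A$ with $\mathfrak{g}=\mathfrak{g}\otimes1$ acts as a two-sided identity by \eqref{eq:iden 1}, and since $1^t$ plays the role of the identity matrix one checks $1\circ a=2a^?$ and $[1,a]=0$ appropriately to conclude $1^+$ is the unit of $\mathcal{A}$.

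Next, for parts (2) and (3), the bimodule axioms amount to the associativity constraints $a(a'm)=(aa')m$, $(ma)a'=m(aa')$ and $a(ma')=(am)a'$ where $a,a'\in\mathcal{A}$ and $m$ ranges over $C\oplus E$ (resp. $C'\oplus E'$, resp. $C\oplus E\oplus C'$). Each of these is again a consequence of the Jacobi identity in $L$, this time applied to two elements of $\mathfrak{g}\otimes A$ and one element of the appropriate isotypic component $S\otimes C$, $\Lambda\otimes E$, or $S'\otimes C'$. I would use the rewritten formulas of Remark \ref{FF-1} (and the $n=3$ analogue), which already separate the $\mathfrak{g}^+$- and $\mathfrak{g}^-$-parts and display the coordinate products $(a^\pm,c)_C$, $(a^\pm,e)_E$, etc., so that after expanding the Jacobi identity the matrix parts $x_1^? \diamond(x_2^? \diamond s)$ versus $(x_1^?x_2^?+\dots)\diamond s$ match automatically (these are genuine identities for $n\times n$ matrices), forcing the coordinate sides to agree. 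The slightly delicate piece is the component $E$ in the $n=4$ case, where by Table \ref{t3} one has $\mathfrak{g}\otimes\Lambda\to S\oplus\Lambda\oplus S'$, producing the three-term product \eqref{eq:except hom} for $a^\pm\lambda$; one must track the $C'$-valued summand carefully and check it is consistent with the $f,f^{-1}$ identifications.

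The main obstacle I anticipate is bookkeeping rather than conceptual: there are many homogeneous cases (six for $\mathcal{A}$ acting on each of two or three modules, with $\pm$ signs), and one must be disciplined about the factor-of-$2$ conventions in $\alpha_1\alpha_2=\tfrac12[\alpha_1,\alpha_2]+\tfrac12(\alpha_1\circ\alpha_2)$ and about which of the two generators of $\hom(\mathfrak{g}\otimes\mathfrak{g},\mathfrak{g})$ (the Lie product $[\,,]$ versus the Jordan product $\circ$) a given matrix expression projects onto. I would organize the verification by fixing, once and for all, the projection formulas expressing $x\diamond y$ and $xy$ in terms of $x\circ y=xy+yx-\tfrac2n\mathrm{tr}(xy)I$, $[x,y]$ and $(x\mid y)=\tfrac1n\mathrm{tr}(xy)$, then substituting into the Jacobi identities mechanically. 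A secondary subtlety is that for $n=3$ the products $(c,e)_Z$ and $(c',e')_Z$ are neither symmetric nor skew-symmetric (as flagged in the text), so in the $C\oplus E$ and $C'\oplus E'$ bimodule checks I must keep $(c,e)$ and $(e,c)$ as distinct data and not accidentally symmetrize; the extension rule $(\beta\alpha)_{E'}=-(\alpha\beta)_{E'}$ stated before Table \ref{t4-1} is what pins this down. Once these conventions are fixed the proof is a finite, if lengthy, case check driven entirely by Jacobi in $L$ plus linear independence of the equivariant maps listed in \eqref{t2} and \eqref{t2-1}.
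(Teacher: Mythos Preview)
Your approach is correct and, for part (1), is exactly the argument the paper has in mind: it simply cites \cite[Theorem 4.1.3]{key-9}, whose content is precisely the Jacobi-identity computation on three elements of $\mathfrak{g}\otimes A$ that you describe.

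For parts (2) and (3) there is a mild difference of emphasis worth noting. The paper's proof is one line: it appeals only to the tensor product decomposition tables (Tables \ref{t12-2} and \ref{t13}) to see that $\mathfrak{g}\otimes S$, $\mathfrak{g}\otimes\Lambda$, etc.\ land back in $S\oplus\Lambda$ (resp.\ $S\oplus\Lambda\oplus S'$ for $n=4$), which forces $\mathcal{A}\cdot(C\oplus E)\subseteq C\oplus E$ and so on. In other words the paper treats the bimodule claim as essentially a closure statement, with the compatibility $(a_1a_2)m=a_1(a_2m)$ left implicit (it is inherited from the associativity of $\mathfrak{a}$, or equivalently from the same Jacobi mechanism you invoke). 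Your plan to verify the bimodule axioms directly by expanding Jacobi on two $\mathfrak{g}\otimes A$ elements and one $S\otimes C$ (or $\Lambda\otimes E$, $S'\otimes C'$) element is more explicit and self-contained; it buys you an independent check that does not presuppose associativity of the larger algebra $\mathfrak{a}$. The cost, as you already anticipate, is the bookkeeping with signs, factors of $\tfrac12$, and (for $n=4$) the extra $C'$-term in $a^\pm e$ from \eqref{eq:except hom}. Both routes are valid; yours is simply the unabridged version of what the paper compresses into a reference to the tables.
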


\begin{proof}
1) This is similar to the proof of\cite[Theorem 4.1.3]{key-9}.

(2) and (3) can be deduce from tensor product decompositions for $sl_{n}$
$(n=3,4)$, see Tables \ref{t12-2} and \ref{t13}.
\end{proof}
\begin{thm}
\label{a invol-1} The linear transformation $\gamma:\mathfrak{a}\rightarrow\mathfrak{a}$
defined by
\[
\gamma(a^{-})=-a^{-},\gamma(a^{+})=a^{+},\gamma(c)=-c,\gamma(e)=e,\gamma(c')=-c',
\]
is an antiautomorphism of order 2 of the algebra $\mathfrak{a}.$
\end{thm}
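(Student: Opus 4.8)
The plan is to verify directly that $\gamma$ is linear (clear from the definition), has order $2$ (immediate, since each basic homogeneous component is sent to $\pm$ itself), and reverses products, i.e. $\gamma(\alpha_1\alpha_2)=\gamma(\alpha_2)\gamma(\alpha_1)$ for all homogeneous $\alpha_1,\alpha_2\in\mathfrak a$. Because $\mathfrak a$ is spanned by the homogeneous subspaces $A^+,A^-,C,E,C'$ (and $E'$ when $n=3$), it suffices to check the product-reversing identity on each pair of homogeneous components, so the proof reduces to a finite case check governed by Tables \ref{t4-1} and \ref{t4-1-2}.

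The key observation that makes the case check uniform is the following: write $\varepsilon(\alpha)\in\{+1,-1\}$ for the sign with which $\gamma$ acts on a homogeneous element $\alpha$, so $\varepsilon=-1$ on $A^-,C,C'$ (and $E'$ for $n=3$) and $\varepsilon=+1$ on $A^+,E$. Then $\gamma(\alpha_1\alpha_2)=\gamma(\alpha_2)\gamma(\alpha_1)$ amounts, after recalling $\alpha_1\alpha_2=\tfrac12[\alpha_1,\alpha_2]+\tfrac12\,\alpha_1\circ\alpha_2$ with $[\ ]$ skew and $\circ$ symmetric, to the two requirements
\begin{align*}
\varepsilon(\beta_1)\varepsilon(\beta_2)\,[\beta_1,\beta_2]_{Z} &= -\,\varepsilon(z)\,[\beta_1,\beta_2]_{Z},\\
\varepsilon(\beta_1)\varepsilon(\beta_2)\,(\beta_1\circ\beta_2)_{Z} &= \varepsilon(z)\,(\beta_1\circ\beta_2)_{Z},
\end{align*}
for every entry $Z$ appearing in row $\beta_1$, column $\beta_2$ of the relevant table, where $z$ is any homogeneous element of $Z$. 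In other words one must check, entry by entry, that a $\circ$-product landing in $Z$ has $\varepsilon(\beta_1)\varepsilon(\beta_2)=\varepsilon(z)$, and a $[\ ]$-product landing in $Z$ has $\varepsilon(\beta_1)\varepsilon(\beta_2)=-\varepsilon(z)$. This is exactly the pattern visible in the tables: e.g. $(C,\circ)$ and $(E,[\,])$ both occur in the $A^+\!\times\!A^+$ cell, and indeed $\varepsilon(A^+)\varepsilon(A^+)=+1=\varepsilon(C)$ while $\varepsilon(A^+)\varepsilon(A^+)=+1=-\varepsilon(E)$; similarly for the $A^+\!\times\!A^-$ cell, $(A^-,\circ)$ needs $\varepsilon(A^+)\varepsilon(A^-)=-1=\varepsilon(A^-)$ and $(A^+,[\,])$ needs $\varepsilon(A^+)\varepsilon(A^-)=-1=-\varepsilon(A^+)$, etc. One runs through all cells of Table \ref{t4-1} for $n=3$ and Table \ref{t4-1-2} for $n=4$ and confirms the sign bookkeeping; the only cells needing slightly more care are the mixed $C\!\times\!E$ / $C'\!\times\!E'$ products for $n=3$ (which are neither symmetric nor skew) and the exceptional products $a^\pm\lambda$ of \eqref{eq:except hom} for $n=4$, which must be handled using their explicit three-term expressions and the signs $\varepsilon(C)=\varepsilon(C')=-1$, $\varepsilon(E)=+1$, $\varepsilon(A^\pm)=\pm1$.

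I would organize the write-up as: first reduce to the homogeneous case and state the sign criterion above; then observe that $\gamma^2=\mathrm{id}$ is obvious; then present the verification as a short table or list of the $\varepsilon$-values together with the remark that inspection of every cell of Tables \ref{t4-1}, \ref{t4-1-2} (and of \eqref{eq:except hom}) shows the criterion holds. The genuine content is simply that the grading by which component of $\mathfrak a$ one sits in, combined with the $\mathbb Z/2$ symmetry $x\mapsto x^t$ underlying the decomposition $\mathfrak g=\mathfrak g^+\oplus\mathfrak g^-$ and Definition \ref{Lem-Vdual}, is compatible with transposition — so $\gamma$ is the ``shadow'' on $\mathfrak a$ of the antiautomorphism $x\mapsto x^t$ on matrices. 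I expect the main (and only real) obstacle to be the careful treatment of the two non-symmetric/non-skew products $C\!\times\!E$, $C'\!\times\!E'$ for $n=3$ and the exceptional $a^\pm\lambda$ for $n=4$, where one cannot invoke the clean $[\ ]/\circ$ dichotomy and must instead track the sign of each summand separately; everything else is routine sign chasing through the two tables.
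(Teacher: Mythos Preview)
Your overall strategy---reduce to homogeneous pairs and verify the sign rule $\varepsilon(\beta_1)\varepsilon(\beta_2)=\varepsilon(Z)$ for $\circ$-targets and $\varepsilon(\beta_1)\varepsilon(\beta_2)=-\varepsilon(Z)$ for $[\ ]$-targets---is correct and is exactly the kind of direct check the paper defers to its reference \cite[Theorem~4.1.6]{key-10}. So the plan is fine.

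However, two concrete slips need fixing before this becomes a proof. First, your sign assignment for $E'$ in the $n=3$ case is wrong: you put $\varepsilon(E')=-1$, but the tables force $\varepsilon(E')=+1$ (equivalently $\gamma(e')=e'$). For instance, the cell $A^{+}\times E'$ in Table~\ref{t4-1} contains $(C',[\ ])$, and your criterion then demands $\varepsilon(A^{+})\varepsilon(E')=-\varepsilon(C')=+1$, whence $\varepsilon(E')=+1$; with $\varepsilon(E')=-1$ this cell already fails. (The paper's statement of the theorem omits $e'$, but the only consistent extension is $\gamma(e')=e'$.) Second, your worked example is garbled: the $A^{+}\times A^{+}$ cell of both tables contains $(A^{+},\circ)$ and $(A^{-},[\ ])$, not $(C,\circ)$ and $(E,[\ ])$, and the equalities you write there, ``$+1=\varepsilon(C)$'' and ``$+1=-\varepsilon(E)$'', contradict your own definitions $\varepsilon(C)=-1$, $\varepsilon(E)=+1$. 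Replace this with a correct cell (e.g.\ $A^{+}\times C$ in Table~\ref{t4-1-2}, where $\varepsilon(A^{+})\varepsilon(C)=-1=\varepsilon(C)$ for $(C,\circ)$ and $-1=-\varepsilon(E)$ for $(E,[\ ])$) and the illustration will actually illustrate. With these two corrections, the remaining case-check---including the non-symmetric $C\times E$, $C'\times E'$ products for $n=3$ and the three-term formulas~\eqref{eq:except hom} for $n=4$---goes through as you describe.
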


\begin{proof}
See \cite[Theorem 4.1.6]{key-10}.
\end{proof}

\subsection{\label{sec:Coordinate-algebra bb-1}Coordinate algebra $\mathfrak{b}$ }

\label{six-1} Let $L$ be an $\Theta_{4}$-graded Lie algebra with
the grading subalgebra $\mathfrak{g}\cong sl_{4}$. Recall that we
denote
\begin{align*}
\mathfrak{a} & :=A^{+}\oplus A^{-}\oplus C\oplus E\oplus C'\text{\quad and\quad}\mathfrak{b}:=\mathfrak{a}\oplus B\oplus B.
\end{align*}
The aim of this subsection is to show that $\mathfrak{b}$ is an algebra
with identity $1^{+}$ with respect to the multiplication extending
that on $\mathfrak{a}$ given in Table \ref{t5-1}. It can be shown
that all products $(\beta_{1},\beta_{2})_{Z}$ with $\beta_{1},\beta_{2}\in B\oplus B'$
are either symmetric or skew-symmetric. This is why we will write
$(\beta_{1}\circ\beta_{2})_{Z}$ or $[\beta_{1},\beta_{2}]_{Z}$,
respectively, instead of $(\beta_{1},\beta_{2})_{Z}$. For $\alpha\in\mathfrak{a}$
and $\beta\in B\oplus B'$ we will write $\alpha\beta$ (resp. $\beta\alpha$)
instead of $(\alpha,\beta)_{Z}$ (resp. $(\beta,\alpha)_{Z}$). Let
$b\in B$ and $b'\in B$. We define $b\alpha:=\gamma(\alpha)b$ and
$\alpha b':=b'\gamma(\alpha)$. We will show that $B\oplus B'$ is
an  $\mathfrak{a}$-bimodule. 

Recall that 
\[
x\otimes a=\frac{(x+x^{t})}{2}\otimes a+\frac{(x-x^{t})}{2}\otimes a\in\mathfrak{g^{+}}\otimes A+\mathfrak{g^{-}}\otimes A.
\]

Let $u\otimes b\in V\otimes B$ and $v'\otimes b'\in V'\otimes B'$.
We need the following formula from (\ref{main for}): 
\[
[u\otimes b,v'\otimes b']=(uv'^{t}-\frac{\tra(uv'^{t})}{4}I)\otimes(b,b')_{A}+\frac{2\tra(uv'^{t})}{4}\langle b,b'\rangle.
\]
By splitting $(b,b')_{A}$ into symmetric and skew-symmetric parts
and using the equations 
\begin{align*}
(uv'^{t}-\frac{\tra(uv'^{t})}{4}I)+(uv'^{t}-\frac{\tra(uv'^{t})}{4}I)^{t} & =uv'^{t}+v'u^{t}-\frac{2\tra(uv'^{t})}{4}I,\\
(uv'^{t}-\frac{\tra(uv'^{t})}{4}I)-(uv'^{t}-\frac{\tra(uv'^{t})}{4}I)^{t} & =uv'^{t}-v'u^{t},
\end{align*}
we get 
\begin{align}
[u\otimes b,v'\otimes b'] & =(uv'^{t}+v'u^{t}-\frac{2\tra(uv'^{t})}{4}I)\otimes\frac{[b,b']_{A^{-}}}{2}+\nonumber \\
 & \quad(uv'^{t}-v'u^{t})\otimes\frac{(b\circ b')_{A^{+}}}{2}+\frac{2\tra(uv'^{t})}{4}\langle b,b'\rangle.\label{eq:bb' formull-1}
\end{align}

Let $b,b_{1},b_{2}\in B$ and $b',b_{1}',b_{2}'\in B'$. Using (\ref{main for})
and (\ref{eq:bb' formull-1}) we get 
\begin{align}
[u\otimes b_{1},v\otimes b_{2}] & =(uv^{t}+vu^{t})\otimes\frac{[b_{1},b_{2}]_{C}}{2}+(uv^{t}-vu^{t})\otimes\frac{(b_{1}\circ b_{2})_{E}}{2},\nonumber \\{}
[u'\otimes b'_{1},v'\otimes b'_{2}] & =(u'v'^{t}+v'u'^{t})\otimes\frac{[b'_{1},b'_{2}]_{C'}}{2}+f(u'v'^{t}-v'u'^{t})\otimes\frac{(b'_{1}\circ b'_{2})_{E}}{2},\nonumber \\{}
[u\otimes b,v'\otimes b'] & =(uv'^{t}+v'u^{t}-\frac{2\tra(uv'^{t})}{4}I)\otimes\frac{[b,b']_{A^{-}}}{2}+\nonumber \\
 & \quad(uv'^{t}-v'u^{t})\otimes\frac{(b\circ b')_{A^{+}}}{2}+\frac{2\tra(uv'^{t})}{4}\langle b,b'\rangle.\label{eq:formulla for natural elements-1}
\end{align}
We define 
\begin{eqnarray*}
b_{1}b_{2}:=\frac{[b_{1},b_{2}]_{C}}{2}+\frac{(b_{1}\circ b_{2})_{E}}{2}, & \  & b'_{1}b'_{2}:=\frac{[b'_{1},b'_{2}]_{C'}}{2}+\frac{(b'_{1}\circ b'_{2})_{E}}{2},\\
bb':=\frac{[b,b']_{A^{-}}}{2}+\frac{(b\circ b')_{A^{+}}}{2}, & \  & b'b:=-\frac{[b,b']_{A^{-}}}{2}+\frac{(b\circ b')_{A^{+}}}{2}.
\end{eqnarray*}
 Then $\mathfrak{b}=\mathfrak{a}\oplus B\oplus B'$ is an algebra
with multiplication extending that on $\mathfrak{a.}$ The following
table describes the products of homogeneous elements of $\mathfrak{b}$
(use Table \ref{t4-1} for the products on $\mathfrak{a}$). 

\begin{table}[H]
\begin{tabular}{|c|c|c|c|c|c|}
\hline 
. & $A^{+}+A^{-}$ & $C+E$ & $C'$ & $B$ & $B'$\tabularnewline
\hline 
\hline 
$A^{+}+A^{-}$ & $A^{+}+A^{-}$ & $C+E+C'$ & $C'+E$ & $B$ & $B'$\tabularnewline
\hline 
$C+E$ & $C+E+C'$ & $0$ & $A^{+}+A^{-}$ & $0$ & $B$\tabularnewline
\hline 
$C'$ & $C'+E$ & $A^{+}+A^{-}$ & $0$ & $B$' & 0\tabularnewline
\hline 
$B$ & $B$ & $0$ & $B'$ & $C+E$ & $A^{+}+A^{-}$\tabularnewline
\hline 
$B'$ & $B'$ & $B$ & $0$ & $A^{+}+A^{-}$ & $C'+E$\tabularnewline
\hline 
\end{tabular}

\caption{Products in $\mathfrak{b}$}

\label{t5-1}
\end{table}

Using the same arguments of \cite[Section 4.2]{key-9} we get the
following properties:
\begin{thm}
\label{b involution-1} (1) The linear transformation $\eta:\mathfrak{b}\rightarrow\mathfrak{b}$
defined by $\eta(\alpha)=\gamma(\alpha)$, $\eta(b)=b$ and $\eta(b')=b'$
for all $\alpha\in\mathfrak{a}$, $b\in B$ and $b'\in B'$ is an
antiautomorphism of order $2$ of the algebra $\mathfrak{b}$.

(2) $1^{+}$ is the identity element of $\mathfrak{b}$. 

(3) Let $b\in B$, $b'\in B'$ and $\alpha\in\mathfrak{a}$ .Then
\begin{align*}
[z\otimes\alpha,u\otimes b] & =zu\otimes\alpha b=-[u\otimes b,z\otimes\alpha],\\{}
[u'\otimes b',z\otimes\alpha] & =z^{t}u'\otimes b'\alpha=-[z\otimes\alpha,u'\otimes b'].
\end{align*}

(4) $B\oplus B'$ is an  $\mathfrak{a}$-bimodule.

(5) $B$ and $B'$ are $\mathcal{\mathcal{A}}$-bimodules.
\end{thm}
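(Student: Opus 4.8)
The plan is to verify each of the five assertions of Theorem \ref{b involution-1} by reducing them, via the evaluation identification (\ref{eq:MHom}), to identities inside the Lie algebra $L$ together with the multiplication formulas (\ref{main for}), exactly as was done for the algebra $\mathfrak{a}$ in Theorem \ref{a invol-1} and Proposition \ref{cor A is sub-1}. Throughout I will use the fact, already established in Section \ref{3}, that the products in Table \ref{t5-1} are precisely the projections of the Lie bracket of $L$ onto the various isotypic Hom-components, so that associativity and bimodule identities in $\mathfrak{b}$ correspond to consequences of the Jacobi identity in $L$.

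First I would prove (3). Since $V\otimes B$ and $V'\otimes B'$ are $\mathfrak{g}$-submodules of $L$ and $\hom(\mathfrak{g}\otimes V,V)$, $\hom(\mathfrak{g}\otimes V',V')$ are one-dimensional with bases $x\otimes u\mapsto xu$ and $x\otimes v'\mapsto x^{t}v'$ (see (\ref{t2})), the bracket $[z\otimes\alpha,u\otimes b]$ must equal $zu\otimes(\alpha,b)_{B}$ for some bilinear map $(\ ,\ )_{B}$; the content is that this map coincides with the product $\alpha b$ defined on $\mathfrak{b}$. This follows by writing $z=z^{+}+z^{-}$ with $z^{\pm}\in\mathfrak{g}^{\pm}$, applying the relevant lines of (\ref{main for}) (the $[x\otimes a,u\otimes b]$ line and, for the $B'$ side, the $[u'\otimes b',x\otimes a]$ line), and comparing with the definitions $b\alpha=\gamma(\alpha)b$, $\alpha b'=b'\gamma(\alpha)$; the appearance of the transpose $x^{t}$ in the action on $V'$ is exactly what forces the twist by $\gamma$. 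The skew-symmetry statements are immediate from antisymmetry of the bracket.

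Next, (2) and (4)–(5) are deduced from (3) and Table \ref{t5-1} by the Jacobi identity. For (2), I would check $1^{+}b=b$ and $b1^{+}=b$ by taking $\alpha=1^{+}$, i.e. $z\otimes1^{+}$ the copy of $\mathfrak{g}$ in $L$, so that (\ref{eq:iden 1}) gives $[z\otimes1,u\otimes b]=z.u\otimes b=zu\otimes b$, matching $1^{+}b$; together with the already-known fact that $1^{+}$ is the identity of $\mathfrak{a}$ (Proposition \ref{cor A is sub-1}(1)) and of the action on $C+E+C'$, this gives (2). For (4), the bimodule axiom $(\alpha_{1}\alpha_{2})\beta=\alpha_{1}(\alpha_{2}\beta)$ with $\alpha_{1},\alpha_{2}\in\mathfrak{a}$, $\beta\in B\oplus B'$ is obtained by applying the Jacobi identity to the triple $(z_{1}\otimes\alpha_{1},z_{2}\otimes\alpha_{2},u\otimes b)$ in $L$ and projecting onto the $V\otimes B$ (or $V'\otimes B'$) isotypic component, using that $[z_{1}\otimes\alpha_{1},z_{2}\otimes\alpha_{2}]$ lands in $(\mathfrak{g}\otimes A)\oplus D$ and that the $D$-part acts on $u\otimes b$ by $d.u\otimes db=u\otimes db$; the matrix identity $z_{1}(z_{2}u)-z_{2}(z_{1}u)=(z_{1}z_{2}-z_{2}z_{1})u=[z_{1},z_{2}]u$ together with $(z_{1}\circ z_{2})u$ being absorbed appropriately makes the $\mathfrak{g}$-module side match. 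Assertion (5) is then the restriction of (4) to the subalgebra $\mathcal{A}=A^{+}\oplus A^{-}$, which is a subalgebra by Proposition \ref{cor A is sub-1}(1), and the invariance of $B$, $B'$ separately under $\mathcal{A}$ (as opposed to all of $\mathfrak{a}$) is read off from Table \ref{t5-1}.

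Finally, for (1), that $\eta$ is an antiautomorphism of order $2$: it is order $2$ because $\gamma$ is (Theorem \ref{a invol-1}) and $\eta$ fixes $B$ and $B'$ pointwise, so I only need $\eta(\beta_{1}\beta_{2})=\eta(\beta_{2})\eta(\beta_{1})$ for all homogeneous $\beta_{1},\beta_{2}\in\mathfrak{b}$, and by Theorem \ref{a invol-1} this is already known when both lie in $\mathfrak{a}$. The remaining cases are: $\beta_{1}\in\mathfrak{a}$, $\beta_{2}\in B$ (and the three analogous ones), where $\eta(\alpha b)=\eta(b'\gamma(\alpha))$-type relations reduce to the defining equations $b\alpha=\gamma(\alpha)b$ and $\alpha b'=b'\gamma(\alpha)$ together with $\gamma^{2}=\mathrm{id}$; and $\beta_{1},\beta_{2}\in B\oplus B'$, where one must check $\eta(b_{1}b_{2})=\eta(b_{2})\eta(b_{1})$, i.e. $(b_{1}b_{2})=b_{2}b_{1}$ when $b_{1}b_{2}\in C+E$ — here the definitions $b_{1}b_{2}=\tfrac12[b_{1},b_{2}]_{C}+\tfrac12(b_{1}\circ b_{2})_{E}$, $b_{2}b_{1}=-\tfrac12[b_{1},b_{2}]_{C}+\tfrac12(b_{1}\circ b_{2})_{E}$ show $\eta$ swaps the two, since $\eta$ is $+1$ on $E$ and one must verify it is $-1$ on $C$ and $-1$ on $C'$, consistently with $\gamma(c)=-c$, $\gamma(c')=-c'$ — and the mixed case $bb'\in A^{+}+A^{-}$, where $\eta(bb')=\gamma(bb')=-\tfrac12[b,b']_{A^{-}}+\tfrac12(b\circ b')_{A^{+}}=b'b=\eta(b')\eta(b)$ using $\gamma=+1$ on $A^{+}$, $-1$ on $A^{-}$ and the definition of $b'b$. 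I expect the main obstacle to be purely bookkeeping: keeping the $f$-twists in the $B'$-formulas (e.g. the $f(u'v'^{t}-v'u'^{t})$ term) and the $\pm$ signs of $\gamma$ on each component straight, so that the sign in each bimodule/antiautomorphism identity comes out exactly as required; conceptually everything is forced by Jacobi in $L$ and the one-dimensionality of the Hom-spaces in (\ref{t2}), so as in \cite[Section 4.2]{key-9} there is no genuine difficulty beyond the case analysis.
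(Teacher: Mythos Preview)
Your approach is correct and is essentially the same as the paper's: the paper proves (3) from the multiplication formulas (\ref{main for}) and Table \ref{t5-1}, deduces (5) from the $\mathcal{A}$-invariance of $B$ and $B'$ visible in Table \ref{t5-1}, and for (1), (2), (4) simply refers to the corresponding results in \cite[Section 4.2]{key-9}, whose proofs are exactly the case-by-case verifications and Jacobi arguments you outline. A few minor slips of the pen aside (e.g.\ in your case $b_{1},b_{2}\in B$ you mean $\gamma(b_{1}b_{2})=b_{2}b_{1}$, not $b_{1}b_{2}=b_{2}b_{1}$; and in (4) the bracket $[z_{1}\otimes\alpha_{1},z_{2}\otimes\alpha_{2}]$ lands in $(\mathfrak{g}\otimes A)\oplus D$ only when $\alpha_{1},\alpha_{2}\in A$, so the general bimodule check needs the full case list from Table \ref{t5-1}), your plan matches what the references contain.
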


\begin{proof}
(1) This is similar to \cite[Theorem 4.2.1]{key-9}.

(2) This is similar to \cite[Theorem 4.2.2]{key-9}.

(3) We deduce this by using (\ref{main for}) and Table \ref{t5-1}. 

(4) See \cite[Proposition 4.2.4]{key-9}. 

(5) We deduce this from the properties that $B$ and $B'$ are invariant
under multiplication by $\mathcal{\mathcal{A}}$, see Table \ref{t5-1}. 
\end{proof}
The mapping $\langle,\rangle:X\otimes X'\rightarrow D$ with $X=B,C$
can be extended to $X'\otimes X$ in a consistent way by defining
$\langle x',x\rangle:=-\langle x,x'\rangle$. Let $X,Y\in\{A^{+},A^{-},B,B',C,C',E\}$.
Recall also the maps $\langle,\rangle:A^{\pm}\otimes A^{\pm}\rightarrow D$
described previously (see Remark \ref{FF-1}(a)). For the convenience,
we extend the mappings to the whole space $\mathfrak{b}$ by defining
the remaining $\langle X,Y\rangle$ to be zero. Hence 
\[
\langle\mathfrak{b},\mathfrak{b}\rangle=\langle A^{+},A^{+}\rangle+\langle A^{-},A^{-}\rangle+\langle B,B'\rangle+\langle C,C'\rangle+\langle E,E\rangle.
\]
It follows from condition $(\Gamma3)$ in Definition \ref{def of gamma}
that 
\begin{equation}
D=\langle\mathfrak{b},\mathfrak{b}\rangle.\label{eq:kk-1}
\end{equation}

\begin{prop}
\label{derivation 2-1}(1) $[d,\langle\alpha,\beta\rangle]=\langle d\alpha,\beta\rangle+\langle\alpha,d\beta\rangle$
for all $\alpha,\beta\in\ensuremath{\mathfrak{b}}$ and $d\in D.$

(2) $\langle A^{+},A^{+}\rangle$, $\langle A^{-},A^{-}\rangle$,
$\langle B,B'\rangle$, $\langle C,C'\rangle$ and $\langle E,E\rangle$
are ideals of the Lie algebra $D$.

(3) $D$ acts by derivations on $\mathfrak{b}$ and leaves all subspaces
$A^{+},A^{-},B,\dots,E$ invariant. 
\end{prop}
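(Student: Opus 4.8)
The plan is to prove the three claims by reducing everything to the identity $D=\langle\mathfrak{b},\mathfrak{b}\rangle$ from \eqref{eq:kk-1} together with the multiplication formulas in \eqref{main for} and the fact that $L$ is a Lie algebra. The conceptual point is that $D$ is the space of ``coefficients'' sitting in front of the trivial $\mathfrak{g}$-module $T\subset\mathfrak{g}$ (and inside the various other components), so its bracket with a generic element $x\otimes\alpha$ of $L$ is forced by the Jacobi identity to behave like a derivation of the coordinate algebra $\mathfrak{b}$.

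First I would establish (3): that $D$ acts on $\mathfrak{b}$ by derivations and preserves each of $A^{+},A^{-},B,B',C,C',E$. For the invariance of the subspaces, read off from \eqref{main for} the formulas $[d,x\otimes a]=x\otimes da$, $[d,u\otimes b]=u\otimes db$, etc.; since each $\mathfrak{g}$-isotypic component $M\otimes(\cdot)$ is $D$-stable and the $d$-action is on the second tensor factor, the induced action on $A,B,\dots,E$ is well defined, and the splitting $A=A^{+}\oplus A^{-}$ is preserved because the decomposition \eqref{AA-1} is a decomposition of $\mathfrak{g}$-modules and $d$ commutes with $\mathfrak{g}$. For the derivation property, take two homogeneous elements $X=x\otimes\alpha$, $Y=y\otimes\beta$ of $L$ and apply the Jacobi identity to the triple $(d,X,Y)$: using $[d,[X,Y]]=[[d,X],Y]+[X,[d,Y]]$ and substituting the explicit brackets from \eqref{main for}, the $\mathfrak{g}$-factor $xy$-type expression is unchanged on all three terms, so comparing the coordinate factors yields exactly $d(\alpha\beta)=(d\alpha)\beta+\alpha(d\beta)$; one does this once for each pair of component-types appearing in Table \ref{t5-1}, but all cases are the same one-line computation. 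This is the step that carries the real content, though it is routine given the formulas already assembled.

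Next, (1): for $\alpha,\beta\in\mathfrak{b}$ and $d\in D$, the bracket $[X,Y]$ of the corresponding elements of $L$ has a $D$-component equal to $(x\mid y)\langle\alpha,\beta\rangle$ (or the analogous trace term for the $B,B'$, $C,C'$, $E$ pairs), by \eqref{main for}; apply the Jacobi identity $[d,[X,Y]]=[[d,X],Y]+[X,[d,Y]]$ again and extract the $D$-component of both sides. On the left this gives $[d,(x\mid y)\langle\alpha,\beta\rangle]=(x\mid y)[d,\langle\alpha,\beta\rangle]$; on the right, $[[d,X],Y]=[x\otimes d\alpha,\,y\otimes\beta]$ contributes $(x\mid y)\langle d\alpha,\beta\rangle$ and $[X,[d,Y]]$ contributes $(x\mid y)\langle\alpha,d\beta\rangle$ to the $D$-component. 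Cancelling the common scalar (choosing $x,y$ with $(x\mid y)\neq0$, which is possible since the form is nondegenerate) gives $[d,\langle\alpha,\beta\rangle]=\langle d\alpha,\beta\rangle+\langle\alpha,d\beta\rangle$. One has to check that no other term of $[[d,X],Y]$ or $[X,[d,Y]]$ lands in $D$, which is immediate from \eqref{main for} since the only products landing in $D$ come with a symmetric invariant pairing.

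Finally, (2) is a formal consequence of (1) and \eqref{eq:kk-1}. Since $D=\langle\mathfrak{b},\mathfrak{b}\rangle=\sum\langle X,X'\rangle$ over the five listed pairs, it suffices to show each summand $\langle X,X'\rangle$ is a $D$-submodule of $D$, i.e. $[d,\langle\alpha,\beta\rangle]\in\langle X,X'\rangle$ for $\alpha\in X$, $\beta\in X'$; but by (1) this bracket equals $\langle d\alpha,\beta\rangle+\langle\alpha,d\beta\rangle$, and by (3) $d\alpha\in X$ and $d\beta\in X'$, so the result lies in $\langle X,X'\rangle$ again. I expect the only mild obstacle to be bookkeeping: one must be sure that in each of (1) and (3) the ``coordinate factor'' really does split off cleanly from the $\mathfrak{g}$-factor in every case of Table \ref{t5-1}, in particular in the exceptional cases $a^{\pm}\lambda$ of \eqref{eq:except hom} where the product of coordinates spreads over $C,C',E$ simultaneously; there the derivation identity has to be read componentwise, but since $D$ preserves $C$, $C'$ and $E$ separately this causes no trouble. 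I would remark that this is why the statement is phrased exactly as \cite[Section 4.2]{key-9} and cite that reference for the identical argument in the $sl_n$, $n>4$ case.
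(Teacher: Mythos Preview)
Your proposal is correct and spells out exactly the standard argument that the paper defers to \cite[Proposition 4.2.8]{key-9}: use the Jacobi identity on $(d,X,Y)$ and the explicit bracket formulas in \eqref{main for} to read off the derivation property and the identity $[d,\langle\alpha,\beta\rangle]=\langle d\alpha,\beta\rangle+\langle\alpha,d\beta\rangle$, then deduce (2) formally from (1), (3) and \eqref{eq:kk-1}. One minor clarification: the reason $D$ preserves $A^{+}$ and $A^{-}$ separately is not that these are distinct $\mathfrak{g}$-isotypic pieces (they are both copies of $A$), but simply that $[d,x\otimes a]=x\otimes da$ with $x$ unchanged, so if $x\in\mathfrak{g}^{\pm}$ the result stays in $\mathfrak{g}^{\pm}\otimes A$; this is what you need, and your conclusion is correct.
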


\begin{proof}
This is similar to \cite[Proposition 4.2.8]{key-9}.
\end{proof}

\appendix
\printindex
\end{document}